\documentclass{article}
\usepackage[a4paper, left=0.8in, right=0.8in, top=1in, bottom=1in]{geometry}

\usepackage{graphicx}
\usepackage{caption}
\captionsetup{width=0.9\textwidth} 
\usepackage{enumerate,comment,mathtools,amsmath,amssymb,algorithm,amsthm}
\usepackage{algpseudocode}
\newtheorem{lemma}{Lemma}[section]
\newtheorem{theorem}[lemma]{Theorem}

\newtheorem{definition}{Definition}[section]
\newtheorem{proposition}{Proposition}[section]

\newtheorem{remark}{Remark}[section]
\newtheorem{example}{Example}[section]

\usepackage{xcolor}
\newcommand{\first}[1]{{\color{black}{#1}}}
\newcommand{\second}[1]{{\color{black}{#1}}}
\newcommand{\both}[1]{{\color{black}{#1}}}

\begin{document}

\title{The State-Dependent Riccati Equation in Nonlinear Optimal Control: Analysis and Numerical Approximation}

\author{
    Luca Saluzzi \thanks{Department of Mathematics and Computer Science, University of Ferrara, Italy (\texttt{luca.saluzzi@unife.it})}
}

\maketitle
\begin{abstract}
The State-Dependent Riccati Equation (SDRE) approach is extensively utilized in nonlinear optimal control as a reliable framework for designing robust feedback control strategies. This work provides an analysis of the SDRE approach, examining its theoretical foundations, error bounds, and numerical approximation techniques. We explore the relationship between SDRE and the Hamilton-Jacobi-Bellman (HJB) equation, deriving residual-based error estimates to quantify its suboptimality. Additionally, we introduce an optimal semilinear decomposition strategy to minimize the residual. From a computational perspective, we analyze two numerical methods for solving the SDRE: the offline–online approach and the Newton–Kleinman iterative method. Their performance is assessed through a numerical experiment involving the control of a nonlinear reaction-diffusion PDE. Results highlight the trade-offs between computational efficiency and accuracy, indicating better performance of the Newton–Kleinman approach in achieving stable and cost-effective solutions in the reported experiments.
\end{abstract}

\bigskip

\section{Introduction}

Optimal control of nonlinear dynamical systems is a fundamental problem in engineering, economics, and applied mathematics. The Hamilton-Jacobi-Bellman equation provides a rigorous framework for computing optimal feedback control laws, whose foundational work dates back to Bellman’s seminal contributions \cite{bellman1957}, establishing dynamic programming as a cornerstone of optimal control theory. However, solving the HJB equation in practical scenarios remains intractable due to its nonlinear and high-dimensional nature. To mitigate this issue, various approximation techniques have been developed, including sparse grids strategies \cite{GK16}, polynomial approximations \cite{kunisch2023learning,kalise2018polynomial}, applications of artificial neural networks \cite{Darbon_Langlois_Meng_2020,Kunisch_Walter_2021,Zhou_2021} and regression-type methods in tensor formats \cite{oster22,dolgov2023data}.
In this context, the State-Dependent Riccati Equation method has emerged as an effective approach, offering a computationally feasible alternative by leveraging local approximations of the value function through a Riccati-based formulation. The SDRE method extends the classical Linear Quadratic Regulator (LQR) framework to nonlinear systems by representing the system dynamics in a state-dependent linearized form. This transformation enables the application of Riccati-based feedback synthesis, preserving the stabilizing properties of LQR while maintaining adaptability to nonlinear dynamics. A key advantage of SDRE is its ability to provide suboptimal yet computationally efficient solutions, making it particularly useful in applications \cite{voos2006nonlinear,vaddi2009numerical,heiland2023low}.
In \cite{banks2007}, the authors developed a rigorous formulation of SDRE-based control, demonstrating its effectiveness in stabilizing nonlinear systems under mild regularity assumptions. Moreover, it has been shown that the SDRE feedback induces a stationary Hamiltonian with respect to the control, ensuring that as the state converges to zero, the necessary optimality conditions are asymptotically satisfied at a quadratic rate. For a comprehensive discussion on this topic, we refer to \cite{CIMEN,cloutier1996nonlinear}.
Despite its advantages, the SDRE approach is not without limitations. The choice of the semilinear representation can significantly impact the accuracy of the solution, and existing methods often rely on heuristics for selecting an appropriate decomposition. 
 Furthermore, numerical solvers for the SDRE require efficient methodologies for iteratively solving the associated Riccati equations. An initial attempt to mitigate the computational complexity is presented in \cite{beeler2000feedback}, where the authors introduce a power series expansion for the SDRE solution. This approach involves computing certain terms offline by solving a Riccati equation alongside a sequence of Lyapunov equations. However, for high-dimensional problems, this method may become impractical due to the computational burden and storage requirements associated with high-dimensional matrix equations. To address this challenge, the authors of \cite{alla2023state} proposed an \emph{offline-online} approach based on a first-order approximation, wherein the computational effort is partitioned into an intensive offline phase and a more efficient online stage. Another promising avenue involves solving the SDRE using the Newton-Kleinman method \cite{kleinman1968,saluzzi2025dynamical}, which capitalizes on the iterative nature of the approximation process.
 
 
\subsection{Contributions and Outline}
This paper aims to provide a deeper understanding of the SDRE method by analyzing its theoretical properties, deriving error bounds for the approximation, and exploring computational techniques for its implementation. Our contributions include:
\begin{itemize}
\item a derivation of error bounds for the SDRE approximation, quantifying the deviation from the optimal HJB solution;
\item an investigation of optimal semilinear representations that minimize the SDRE residual error,
\item a comparative study of numerical methods for solving the sequence of Riccati equations, highlighting the advantages of iterative methods.
\end{itemize}
The paper is structured as follows: Section 2 formulates the infinite-horizon optimal control problem and introduces key concepts related to the SDRE framework. Section 3 examines error bounds for the SDRE approach, focusing on the residual with respect to the HJB equation and the existence of an optimal semilinear formulation. Section 4 presents numerical techniques for solving the SDRE, including a comparative analysis of two computational methods applied to the control of a nonlinear reaction-diffusion PDE. Finally, Section 5 concludes with a discussion on potential directions for future research.

\section{The Infinite Horizon Optimal Control Problem}\label{sec:iocp}
In this section, we establish the formulation of the deterministic infinite-horizon optimal control problem, for which we aim to derive a feedback control law. We begin by presenting the optimal feedback synthesis via the Hamilton-Jacobi-Bellman framework.  

We consider a control-affine dynamical system governed by  
\begin{equation}\label{eq}
\left\{ \begin{array}{l}
\dot{y}(s) = f(y(s)) + B(y(s)) u(s), \quad s \in (0, +\infty),\\
y(0) = x \in \mathbb{R}^d.
\end{array} \right.
\end{equation}  
Here, $y: [0, +\infty) \to \mathbb{R}^d$ represents the system state, while $u: [0, +\infty) \to \mathbb{R}^m$ denotes the control input. The set of admissible control functions is given by \second{$\mathcal{U} = L^\infty ([0, +\infty); \mathbb{R}^m)$}. The system dynamics are described by the continuously differentiable functions $f: \mathbb{R}^d \to \mathbb{R}^d$ and $B: \mathbb{R}^d \to \mathbb{R}^{d \times m}$, satisfying $f({0})  = {0}$. When emphasizing the dependence of the control input on the initial state $x \in \mathbb{R}^d$, we explicitly denote it as $u(t;x)$.  

The objective is to minimize the following infinite-horizon cost functional:  
\begin{equation}\label{cost}
 J(u(\cdot \, ;x)) := \int_0^{+\infty} y(s)^{\top} Q y(s) + u^{\top}(s) R u(s)\,ds\,,
\end{equation}  
where $Q \in \mathbb{R}^{d \times d}$ is a symmetric positive semidefinite matrix, and $R \in \mathbb{R}^{m \times m}$ is a symmetric positive definite matrix. The goal is to determine an optimal control policy in feedback form, meaning that the control law depends solely on the current state of the system.  

To this end, we define the value function corresponding to a given initial condition $x \in \mathbb{R}^d$ as  
\begin{equation}
V(x) := \inf\limits_{u\in\mathcal{U}} J(u(\cdot;x))\,,
\label{VF}
\end{equation}  
which, according to standard dynamic programming principles, satisfies the HJB PDE for every $x \in \mathbb{R}^d$:  
\begin{equation}\label{HJB}
\min\limits_{u\in \second{\mathbb{R}^m} }\left\{(f(x)+B(x)u)^{\top} \nabla V(x) + x^{\top} Q x+ u^{\top} R u \right\}=0.
\end{equation}  

The HJB equation \eqref{HJB} is a first-order, fully nonlinear PDE defined over $\mathbb{R}^d$, making it computationally challenging, particularly when $d$ is large.
\second{We remark that the finiteness of the value function is not guaranteed a priori in nonlinear infinite-horizon problems. However, as shown in the subsequent section, under suitable assumptions on the data, the SDRE feedback yields a locally exponentially stable closed-loop system, from which it follows that the associated infinite-horizon cost is finite for all initial conditions in the corresponding region of attraction.
}

The optimal control minimizing the left-hand side of \eqref{HJB} can be derived explicitly as  
\begin{equation}\label{optc}
u^*(x) = -\frac{1}{2} R^{-1} B(x)^{\top} \nabla V(x)\,,
\end{equation}  
leading to the following unconstrained form of the HJB equation:  
\begin{equation}\label{hjbu}
\nabla V(x)^{\top} f(x) - \frac{1}{4} \nabla V(x)^{\top} B(x) R^{-1} B(x)^{\top} \nabla V(x) + x^{\top} Q x = 0.
\end{equation}  

The solution of the partial differential equation \eqref{hjbu} presents significant computational challenges due to the well-known \emph{curse of dimensionality}. Specifically, on a structured grid with $N$ grid points per dimension, the computational complexity scales as $O(N^d)$, rendering the problem intractable for high-dimensional systems. Rather than directly computing the optimal value function and the associated optimal trajectory, one may relax these requirements by seeking a \emph{suboptimal} yet stabilizing feedback control strategy. This consideration has contributed to the widespread adoption of the SDRE approach. In the following section, we introduce its formulation and examine its relationship with the HJB equation.

\subsection{State-Dependent Riccati Equation}
\label{sec:SDRE}
\both{Under the regularity assumptions introduced in the previous section, Proposition~1 in~\cite{cimen2010systematic} guarantees the existence of a state-dependent matrix function $A(x): \mathbb{R}^{d} \rightarrow \mathbb{R}^{d \times d}$ such that $f(x)=A(x)x$. Accordingly, the dynamical system \eqref{eq} can be written in the semilinear form}


\begin{equation}
	\dot{y}(t)  = A(y(t)) y(t) +B(y(t)) u(t).
	\label{semilinear}
\end{equation}
\both{
In dimension greater than one, this representation is non-unique, a feature that will be exploited in the following discussion. Detailed results concerning uniqueness can be found in \cite{cloutier1996nonlinear}.
}
In the special case where the matrix functions are constant, i.e., $A(y(t)) = A \in \mathbb{R}^{d\times d}$ and  $B(y(t)) = B \in \mathbb{R}^{d\times m}$, the problem reduces to the well-known Linear Quadratic Regulator.
\second{

To extend the classical LQR framework to the state-dependent setting, we introduce stabilizability and detectability for non-constant matrix functions defined on a set $\Omega \subseteq \mathbb{R}^d$.

\begin{definition}
    Let $\Omega \subseteq \mathbb{R}^d$ be a nonempty set.  
    The pair $(A(\cdot), B(\cdot))$ is said to be stabilizable on $\Omega$ if for every $x \in \Omega$ there exists a feedback matrix $K(x) \in \mathbb{R}^{m \times d}$  
    such that the matrix $A(x) - B(x) K(x)$ is stable, that is, all its eigenvalues lie strictly in the open left half of the complex plane.
\end{definition}

\begin{definition}
    Let $\Omega \subseteq \mathbb{R}^d$ be a nonempty set.  
    The pair $(A(\cdot), C)$ is said to be detectable on $\Omega$ if the pair $(A^\top(\cdot), C^\top)$ is stabilizable on $\Omega$ 
    in the sense of the previous definition.
\end{definition}

When $A(\cdot)$ and $B(\cdot)$ are constant, these definitions reduce to the classical LQR notions of stabilizability and detectability.} In this setting, when the pair $(A,B)$ is stabilizable and the pair $(A,Q^{1/2})$ is detectable, the optimal feedback control for the LQR is given by:
\begin{equation*}
	u(y) = -R^{-1} B^{\top} P y,
\end{equation*}
where $P\in\mathbb{R}^{d\times d}$ is the unique positive definite solution of the Continuous Algebraic Riccati Equation (CARE)
\begin{align*}
	A^\top P + P A
	-P BR^{-1}B^\top P +Q = 0\ .
\end{align*}
The SDRE method generalizes this approach by allowing the system matrices to depend on the state, leading to the control law
\begin{equation}
	u_S(y) = -R^{-1} B^{\top}(y) P(y)y\,,
	\label{control_sdre}
\end{equation}
where $P(y)$ now satisfies a State-Dependent Riccati Equation:
\begin{align}
	A^\top(y) P(y) +  P(y) A(y)-P(y)B(y)R^{-1}B^\top(y)P(y)+Q & = 0,
	\label{sdre}
\end{align}
with the matrices $A(y)$ and $B(y)$ evaluated at the state $y$. A practical implementation of SDRE-based control for nonlinear stabilization employs a receding horizon strategy. In Algorithm \ref{alg:sdre} the method is sketched. Specifically, at each discrete state \(y_S(t_k)\), the matrices in equation \eqref{sdre} are fixed, and the corresponding Riccati equation is solved for 
\( P(y_S(t_k))\). The control input  \(u_S(y_S(t_k))\) from equation \eqref{control_sdre} is then applied over a short horizon to evolve the system dynamics. This process is repeated at the next state \(y_S(t_{k+1})\) iteratively. 

\begin{algorithm}[htbp]
\caption{The reduced horizon-SDRE method}
\label{alg:sdre}{
\begin{algorithmic}[1]
\Statex{\textbf{Input:} $A(\cdot), B(\cdot), Q,R$, initial conditions $ y_0$, time discretization $\{t_i\}_{i=0}^{N_T}$ }
\Statex{\textbf{Output:} Controlled trajectory $\{y_S(t_i)\}_{i=0}^{N_T}$}

\State{$y_S(t_0):= y_0$}
\For{$n = 0, \dots, N_T-1$}

\State{Compute $P(y_S(t_{n}))$ from \eqref{sdre}}
\State{Build the control $u_S(y_S(t_{n}))$ from \eqref{control_sdre}}
\State{Integrate the system \eqref{semilinear} to obtain $y_S(t_{n+1})$}       
\EndFor 
\end{algorithmic}}
\end{algorithm}

Under suitable stability assumptions, it can be shown that the closed-loop system governed by the feedback law \eqref{control_sdre} exhibits local asymptotic stability, as formalized in the following theorem from \cite{banks2007}.

\begin{theorem}
    Assume that the system \eqref{eq} is such that \( f(x) \) and \( \frac{\partial f(x)}{\partial x_j} \) (for \( j = 1, \dots, n \)) are continuous in \( x \) for all \( \|x\| \leq \hat{r} \), where \( \hat{r} > 0 \).
Assume further that \( A(x) \) and \( B(x) \) are continuous and that the pair $(A(x),B(x))$ is stabilizable and the pair $(A(x),Q^{1/2})$ is detectable in some nonempty neighborhood of the origin \( \mathcal{O} \subseteq B_{\hat{r}}(0) \). Then, the system with the control given by \eqref{control_sdre} is locally asymptotically stable.
\label{thm:stable}
\end{theorem}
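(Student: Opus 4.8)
The plan is to prove Theorem~\ref{thm:stable} by Lyapunov's indirect method: I would show that the closed-loop vector field is differentiable at the origin, identify its Jacobian with the LQR-stabilized matrix associated with the frozen coefficients $A(0),B(0)$, and then exploit the Hurwitz property of that matrix. The first step is to record the regularity of the feedback. On the neighborhood $\mathcal{O}$, stabilizability of $(A(x),B(x))$ together with detectability of $(A(x),Q^{1/2})$ guarantees, by classical Riccati theory, that for each $x\in\mathcal{O}$ the equation \eqref{sdre} possesses a unique symmetric positive semidefinite stabilizing solution $P(x)$, and that the map $x\mapsto P(x)$ is continuous on $\mathcal{O}$, inheriting the continuity of $x\mapsto(A(x),B(x))$ through the CARE solution operator restricted to the stabilizable–detectable set. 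Consequently the closed-loop field $F(y):=A(y)y-B(y)R^{-1}B^\top(y)P(y)y$ is continuous on $\mathcal{O}$ and $F(0)=0$, since $f(0)=A(0)\cdot 0=0$ and the feedback term carries the factor $y$.

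The second step is to compute $DF(0)$. The key observation is that, although $A$ is assumed only continuous, the identity $f(x)=A(x)x$ combined with $f\in C^1$ forces $A(0)=Df(0)$: indeed $f(x)-f(0)=A(0)x+(A(x)-A(0))x$ and $\|(A(x)-A(0))x\|\le\|A(x)-A(0)\|\,\|x\|=o(\|x\|)$, so $f$ is differentiable at $0$ with derivative $A(0)$, which must coincide with $Df(0)$. The same expansion, applied to the feedback term and using continuity of $B$ and of $P$, gives $B(y)R^{-1}B^\top(y)P(y)y=B(0)R^{-1}B^\top(0)P(0)y+o(\|y\|)$. Hence $F$ is Fréchet differentiable at the origin with
\[
DF(0)=A(0)-B(0)R^{-1}B^\top(0)P(0).
\]

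The third step exploits the Riccati structure: $P(0)$ solves the CARE for the data $A(0),B(0),Q,R$, and since $(A(0),B(0))$ is stabilizable and $(A(0),Q^{1/2})$ detectable, the associated closed-loop matrix $A(0)-B(0)R^{-1}B^\top(0)P(0)=DF(0)$ is Hurwitz. Then I would let $\Pi\succ 0$ solve the Lyapunov equation $DF(0)^\top\Pi+\Pi DF(0)=-I$ and set $W(y)=y^\top\Pi y$; writing $F(y)=DF(0)y+r(y)$ with $\|r(y)\|=o(\|y\|)$, one obtains $\dot W(y)=-\|y\|^2+2y^\top\Pi r(y)\le-\tfrac12\|y\|^2$ for $\|y\|$ small, so that $W$ is a strict Lyapunov function on a ball $B_\delta(0)\subseteq\mathcal{O}$, which yields local asymptotic stability.

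I expect the main obstacle to be the second step: rigorously justifying that $F$ is differentiable at the origin with the stated Jacobian even though $A$, $B$, and $P$ are only continuous — in particular establishing the continuity of $x\mapsto P(x)$, which rests on the stabilizability/detectability hypotheses being robust enough on $\mathcal{O}$ for the Riccati solution map to be well-defined and continuous there. A secondary technical point is that, because $F$ may fail to be locally Lipschitz, asymptotic stability should be phrased directly through the monotone decrease of $W$ along every solution of the closed-loop system, rather than invoked via a linearization theorem that presupposes $C^1$ data.
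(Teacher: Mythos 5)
Your proposal is correct and follows essentially the same route as the paper, which defers the proof of this theorem to Banks et al.\ \cite{banks2007} but, in its own proof of the companion Theorem \ref{thm:stable2}, uses exactly your decomposition: write the closed-loop field as the Hurwitz matrix $A(0)-B(0)R^{-1}B^{\top}(0)P(0)$ acting on $y$ plus a perturbation $h(y)$ with $\|h(y)\|/\|y\|\to 0$, the latter resting on continuity of $A$, $B$ and of the stabilizing CARE solution $x\mapsto P(x)$. Your explicit quadratic Lyapunov function $W(y)=y^{\top}\Pi y$ is an equivalent (and arguably cleaner) way to close the argument than the variation-of-constants/Gronwall step used in the cited reference, and your remark about avoiding a $C^1$ linearization theorem is a legitimate refinement rather than a deviation.
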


\second{Indeed, the authors obtain a stronger result. The system controlled by the feedback law \eqref{control_sdre} is locally exponentially stable, \emph{i.e.}, there exist constants $r>0$, $C>0$, and $\lambda>0$ such that
$\|y_S(t)\| \le C \|y_S(0)\| e^{-\lambda t}$, for all $\|y_S(0)\|\le r,\; t \in [0,+\infty)$.
This exponential decay implies that the quadratic running cost is integrable over $[0,+\infty)$, and therefore the value function is finite for all initial states within the region of attraction of the stabilizing feedback, as anticipated in Section~\ref{sec:iocp}.} 

Beyond stabilization, an alternative perspective considers the deviation of the SDRE approach from the optimality conditions dictated by HJB equation. To analyze this discrepancy, we introduce the SDRE-based approximation of the value function:
\begin{equation}
    V_{S}(x) = x^\top P(x) x,
    \label{v_sdre}
\end{equation}  
where \( P(x) \) satisfies the SDRE \eqref{sdre}. The feedback control \eqref{control_sdre} is derived from the optimal control formula \eqref{optc}, under the assumption that the term involving the gradient of \( P(x) \) is negligible.
However, accounting for this term leads to the modified feedback law:

\begin{equation}
\tilde{u}_{S}(x) = -\frac{1}{2} R^{-1} B(x)^T \nabla V_S (x),
\label{utilde_sdre}
\end{equation}
where $\nabla V_S (x) = 2P(x) x + \varphi(x)$,
with the correction term $\varphi$ defined as
$$
[\varphi(x)]_i =  x^\top P_{x_i}(x) x, \quad i =1, \dots, d,
$$
with \( P_{x_i}(x) = \frac{\partial P(x)}{\partial x_i} \). By employing similar analytical techniques as those utilized in the proof of Theorem \ref{thm:stable} in \cite{banks2007}, one can establish that the system dynamics under the control $\tilde{u}_{S}(x)$ exhibit local asymptotic stability, as formally stated in the following result.

\begin{theorem}

Under the assumptions of Theorem \ref{thm:stable}, the system with the control given by \eqref{utilde_sdre} is locally \second{exponentially} stable.
\label{thm:stable2}
\end{theorem}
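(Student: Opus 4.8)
The plan is to exhibit $\tilde u_S$ as a \emph{higher-order perturbation} of the SDRE feedback $u_S$ and then re-run, essentially verbatim, the Lyapunov argument that underlies Theorem~\ref{thm:stable}. First I would recall the structure of that argument. Writing the closed loop under \eqref{control_sdre} as $\dot y = A_{cl}(y)\,y$ with $A_{cl}(y) := A(y) - B(y)R^{-1}B(y)^\top P(y)$, the proof of Theorem~\ref{thm:stable} rests on two facts: (i) $A_{cl}(0)$ is Hurwitz, because evaluating \eqref{sdre} at $y=0$ shows $P(0)$ is the stabilizing solution of the CARE for the pair $(A(0),B(0))$, which is stabilizable with $(A(0),Q^{1/2})$ detectable; and (ii) $A_{cl}(\cdot)$ is continuous near the origin. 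Choosing $M\succ 0$ with $A_{cl}(0)^\top M + M A_{cl}(0) = -I$ and $W(y):=y^\top M y$, one obtains $\dot W(y) = -\|y\|^2 + y^\top[(A_{cl}(y)-A_{cl}(0))^\top M + M(A_{cl}(y)-A_{cl}(0))]y = -\|y\|^2 + o(\|y\|^2) < 0$ on a punctured neighbourhood of $0$.

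Next I would observe that \eqref{utilde_sdre} differs from \eqref{control_sdre} only through the correction term: since $\nabla V_S(x) = 2P(x)x+\varphi(x)$, we have $\tilde u_S(x) = u_S(x) - \tfrac12 R^{-1}B(x)^\top\varphi(x)$, so the closed loop under $\tilde u_S$ reads $\dot y = A_{cl}(y)\,y + g(y)$ with $g(y) := -\tfrac12 B(y)R^{-1}B(y)^\top\varphi(y)$. The key estimate is $g(y) = O(\|y\|^2)$ as $y\to 0$: by definition $[\varphi(y)]_i = y^\top P_{x_i}(y)\,y$, hence $\|\varphi(y)\| \le \big(\textstyle\sum_i\|P_{x_i}(y)\|^2\big)^{1/2}\|y\|^2$, and since $B$ and the maps $x\mapsto P_{x_i}(x)$ are continuous (hence locally bounded) near the origin, $\|g(y)\|\le c\,\|y\|^2$ for $\|y\|$ small.

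With this, I would use the \emph{same} Lyapunov function $W(y)=y^\top M y$: along trajectories of $\dot y = A_{cl}(y)\,y + g(y)$,
\[
\dot W(y) = -\|y\|^2 + y^\top\!\big[(A_{cl}(y)-A_{cl}(0))^\top M + M(A_{cl}(y)-A_{cl}(0))\big]y + 2\,y^\top M g(y) = -\|y\|^2 + o(\|y\|^2) + O(\|y\|^3),
\]
so that $\dot W(y) \le -\tfrac12\|y\|^2 < 0$ for $0<\|y\|\le r$ with $r$ sufficiently small; since $W$ is positive definite, the standard Lyapunov stability theorem yields local asymptotic stability of the origin for the closed loop under \eqref{utilde_sdre}. (An equivalent route is to note that $g(y)=O(\|y\|^2)$ makes the closed-loop vector fields under $u_S$ and $\tilde u_S$ share the same linearization $A_{cl}(0)$ at the origin, and to invoke the principle of linearized stability.)

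The only real obstacle is regularity: the correction $\varphi$, and hence the statement itself, presupposes that $P(\cdot)$ is differentiable near the origin with locally bounded derivatives $P_{x_i}$. Under the hypotheses of Theorem~\ref{thm:stable} one upgrades this by asking $A,B\in C^1$ near the origin and applying the implicit function theorem to the Riccati map $\mathcal F(P,x) = A(x)^\top P + P A(x) - P B(x)R^{-1}B(x)^\top P + Q$: its partial Fréchet derivative in $P$ at $(P(0),0)$ is the Lyapunov operator $Z\mapsto A_{cl}(0)^\top Z + Z A_{cl}(0)$, which is boundedly invertible precisely because $A_{cl}(0)$ is Hurwitz. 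This yields $P\in C^1$ locally, legitimizes $\varphi$, and supplies the bound $\|P_{x_i}(x)\|\le C$ used above; once this is in place, the remainder is the routine higher-order perturbation of the Lyapunov estimate sketched above.
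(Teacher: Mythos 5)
Your proposal is correct and follows essentially the same route as the paper: both decompose the closed loop under $\tilde u_S$ as the constant linearization $A(0)-B(0)R^{-1}B(0)^\top P(0)$ plus a perturbation that is $o(\|y\|)$ because the correction involving $\varphi$ is of higher order, and then invoke the stability framework of Theorem~\ref{thm:stable}. You merely make explicit two points the paper leaves implicit, namely the quadratic Lyapunov function computation and the implicit-function-theorem argument guaranteeing that $P(\cdot)$ is $C^1$ so that $\varphi$ is well defined.
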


\begin{proof}
    Under the feedback control
$\tilde{u}_{S}(x)$, the closed-loop dynamics becomes
$$
    \dot{y} = A(y)y - B(y) R^{-1} B(y)^T P(y)y - \frac{1}{2} B(y) R^{-1} B(y)^T \varphi(y).
    \label{closed_loop}
$$
Equivalently, this can be expressed as
$$
 \dot{y} = (A_0-B_0 R^{-1}B_0^\top P_0)y + h(y),
$$
where $A_0=A(0)$, $B_0 = B(0)$, $P_0 = P(0)$ and 
\second{
\begin{equation*}
\begin{aligned}
h(y)
&= \bigl(A(y) - A_0\bigr)\,y
   - \Bigl(B(y) R^{-1} B(y)^\top P(y) - B_0 R^{-1} B_0^\top P_0\Bigr)\,y \\
&\quad - \frac{1}{2}\, B(y) R^{-1} B(y)^\top \varphi(y).
\end{aligned}
\end{equation*}
}

It follows directly that
$$
    \lim_{y \to 0} \frac{\|h(y)\|}{\|y\|} = 0,
$$
since all the state-dependent matrices in $h(y)$ are continuous and the additional term involving $\varphi$ constitutes a perturbation of higher order in $y$. As a result, the stability analysis proceeds within the same framework as employed in the proof of Theorem \ref{thm:stable} from \cite{banks2007}.
 
\end{proof}

Now, given the SDRE ansatz \eqref{v_sdre}, it is possible to compute its residual with respect to the HJB equation \eqref{hjbu} just inserting $\nabla V_S(x)$ into \eqref{hjbu}, obtaining 
$$
    x^\top \big( P(x) A(x) + A^\top(x) P(x) - P(x) B(x) R^{-1} B^\top(x) P(x) + Q \big) x +E(x) = 0,
    $$
    with
    \begin{equation}
        E(x) = \varphi(x)^\top   \left([A(x)- B(x)R^{-1}B(x)^\top P(x)] x - \frac{1}{4} B(x) R^{-1} B(x)^\top \varphi(x) \right).
        \label{residual}
    \end{equation}
The first term in the expression corresponds to the SDRE \eqref{sdre}. As a result, it is either exactly zero in the theoretical formulation or negligibly small when considering numerical approximations, depending on the accuracy of the numerical method employed. The function $E(x)$ quantifies the effect of the sub-optimality associated with the SDRE approach. While the feedback control $u_S$ provides a stabilizing suboptimal control, the function $\tilde{u}_{S}$ accounts for the correction term $E(x)$, making it a refined alternative when optimality is a concern. Indeed, minimizing $E(x)$  aligns the feedback law $\tilde{u}_{S}$ with the optimal control solution derived from the HJB equation. 
We note that fixing the semilinear form $A(x)$, the residual $E(x)$ can be explicitly determined by computing the derivatives \( \{P_{x_i}(x)\}_i \). Whenever we want to stress the dependence on the semilinear form, we write $P(x;A(x))$ and $E(x;A(x))$.

To obtain the derivatives \( \{P_{x_i}(x)\}_i \), we differentiate the SDRE \eqref{sdre} with respect to \(x_i\), yielding the following Lyapunov equation:

\begin{equation}
P_{x_i}(x)A_{cl}(x) + A_{cl}^\top(x)P_{x_i}(x) 
+\Lambda_i(x) = 0,\label{Lyap}
\end{equation}
where 
\begin{align*}
\Lambda_i(x) =\, &P(x)A_{x_i}(x) + A^\top_{x_i}(x)P(x) \nonumber\\[1mm]
& - P(x) (B_{x_i}(x)R^{-1}B^\top(x) + B(x)R^{-1}B^\top_{x_i}(x))P(x)
\end{align*}
and
\[
A_{cl}(x) = A(x) - B(x)R^{-1}B^\top(x)P(x),
\]
which is referred to as the \emph{closed-loop} system matrix.

Equation \eqref{Lyap} defines a Lyapunov equation for \(P_{x_i}(x)\). Solving this equation for each $i \in \{1, \dots, d\}$ allows for the computation of the residual term \eqref{residual}, thereby quantifying the deviation introduced by the omitted gradient terms.
In particular, if the residual vanishes for all $x$, then the SDRE formulation is equivalent to the HJB equation. Conversely, if the residual does not vanish, one may incorporate $E(x)$ into the running cost to show that the function $V_S(x)$ satisfies a modified HJB equation. Consequently, $V_S(x)$ adheres to a dynamic programming principle, as formalized in the following result.

\begin{proposition}
Define the augmented running cost by
\begin{equation}\label{eq:augmented_cost}
\tilde{\ell}(x,u) = x^\top Q x + u^\top R u + E(x),
\end{equation}
where $E(x)$ is given by \eqref{residual}.
Then, the function $V_S(x)$
satisfies:
\begin{itemize}
    \item The modified HJB equation:
    \begin{equation}
    \min_{u\in \mathbb{R}^m} \left\{ \nabla V_S(x)^\top\Bigl(A(x)x+B(x)u\Bigr) + \tilde{\ell}(x,u) \right\} = 0.
    \label{hjb_sdre}
        \end{equation}
    \item The dynamic programming principle (DPP):
       \begin{equation}
    V_S(x) = \inf_{u \in \mathcal{U}} \left\{ \int_0^T \tilde{\ell}\bigl(x(s),u(s)\bigr)\,ds + V_S\bigl(x(T)\bigr) \right\},
        \label{dpp_sdre}
        \end{equation}
    for every \(T\ge 0\).
\end{itemize}
\end{proposition}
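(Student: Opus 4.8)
The plan is to prove the two claims in order: first the modified HJB equation \eqref{hjb_sdre}, which is essentially a reformulation of the residual identity behind \eqref{residual}, and then the dynamic programming principle \eqref{dpp_sdre}, obtained from \eqref{hjb_sdre} by a standard verification argument.

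For \eqref{hjb_sdre}, observe that the expression inside the minimum, $u\mapsto \nabla V_S(x)^\top\bigl(A(x)x+B(x)u\bigr)+\tilde{\ell}(x,u)$, is a strictly convex quadratic in $u$ since $R\succ 0$; hence its unconstrained minimizer over $\mathbb{R}^m$ is found by annihilating the $u$-gradient, which returns exactly the corrected feedback $\tilde{u}_S(x)=-\tfrac12 R^{-1}B(x)^\top\nabla V_S(x)$ of \eqref{utilde_sdre}. Inserting $u=\tilde{u}_S(x)$ and using $f(x)=A(x)x$ (the semilinear decomposition \eqref{semilinear} satisfies this by construction), the minimum becomes $\nabla V_S(x)^\top f(x)-\tfrac14\nabla V_S(x)^\top B(x)R^{-1}B(x)^\top\nabla V_S(x)+x^\top Qx+E(x)$, i.e. the left-hand side of \eqref{hjbu} evaluated at $V_S$, augmented by $E(x)$. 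Expanding $\nabla V_S(x)=2P(x)x+\varphi(x)$ splits this into the quadratic form $x^\top\bigl(A^\top(x)P(x)+P(x)A(x)-P(x)B(x)R^{-1}B^\top(x)P(x)+Q\bigr)x$, which vanishes because $P(x)$ solves the SDRE \eqref{sdre}, plus a $\varphi$-dependent remainder equal to $-E(x)$ by \eqref{residual} (up to the sign convention carried there), which cancels the $E(x)$ in $\tilde{\ell}$; the minimum is then zero. Beyond this (routine) sign bookkeeping, the content of the step is simply that $E(x)$ is by construction the residual left by the HJB equation once the SDRE is imposed and the gradient correction $\varphi$ is retained.

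For the DPP, I would run the classical verification argument, which presupposes $V_S\in C^1$ on the neighborhood $\mathcal{O}$ of the origin where the stabilizability/detectability hypotheses of Theorem~\ref{thm:stable} hold. This smoothness is inherited from the $C^1$ dependence of $P(\cdot)$ on the state, which in turn follows from the implicit function theorem applied to the Riccati map: its linearization at $P(x)$ is the Lyapunov operator $\delta P\mapsto A_{cl}^\top(x)\delta P+\delta P\,A_{cl}(x)$, boundedly invertible precisely because $A_{cl}(x)$ is Hurwitz — this is exactly the solvability already invoked to define the derivatives $P_{x_i}(x)$ via \eqref{Lyap}. Granting this, fix $x\in\mathcal{O}$. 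For any admissible $u(\cdot)$ whose trajectory $y(\cdot)$ with $y(0)=x$ stays in $\mathcal{O}$ on $[0,T]$, differentiate $s\mapsto V_S(y(s))$, substitute $\dot{y}(s)=A(y(s))y(s)+B(y(s))u(s)$, and use the inequality $\nabla V_S(y)^\top(A(y)y+B(y)u)\ge -\tilde{\ell}(y,u)$ contained in \eqref{hjb_sdre}; integrating on $[0,T]$ gives $V_S(x)\le\int_0^T\tilde{\ell}(y(s),u(s))\,ds+V_S(y(T))$, and taking the infimum over $u$ yields ``$\le$'' in \eqref{dpp_sdre}. For the reverse inequality, use the closed-loop control $u(s)=\tilde{u}_S(y(s))$: since $\tilde{u}_S$ is the minimizer of the bracket of \eqref{hjb_sdre}, the inequality above becomes an equality, so $V_S(x)=\int_0^T\tilde{\ell}(y(s),\tilde{u}_S(y(s)))\,ds+V_S(y(T))\ge\inf_u\{\cdots\}$, and \eqref{dpp_sdre} follows.

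The main obstacle I anticipate is not the algebra but the regularity and locality issues. One must secure $P\in C^1$ (hence $V_S\in C^1$, hence the chain rule along trajectories and even the very definition of $E(x)$), which needs $A(\cdot),B(\cdot)$ to be $C^1$ and restricts all statements to the neighborhood $\mathcal{O}$ on which $A_{cl}(x)$ is Hurwitz; and one must control the trajectories entering \eqref{dpp_sdre} so that they remain in $\mathcal{O}$ on $[0,T]$ — for the closed-loop control this is guaranteed for initial data near the origin by the local asymptotic stability of Theorem~\ref{thm:stable2}, whereas the ``$\le$'' direction strictly speaking forces a restriction to controls keeping $y(\cdot)$ inside $\mathcal{O}$, a caveat that a precise statement of the proposition should carry. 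Admissibility of $\tilde{u}_S$ itself is not an issue here, since $U=\mathbb{R}^m$ and $\tilde{u}_S$ is continuous, hence locally bounded along the closed-loop trajectory.
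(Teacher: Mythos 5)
Your proposal is correct and follows essentially the same route as the paper, whose proof simply observes that the modified HJB identity holds by the very definition of $E(x)$ as the residual of $V_S$ with respect to the HJB equation, and refers to the classical theory (Bardi--Capuzzo-Dolcetta) for the DPP. You merely execute in full what the paper dispatches in two lines --- the explicit minimization over $u$ recovering $\tilde{u}_S$, the verification argument for the DPP, and the $C^1$-regularity of $P(\cdot)$ via the implicit function theorem applied to the Riccati map --- and your locality caveats (trajectories must remain in the neighborhood where $A_{cl}(x)$ is Hurwitz, and the sign bookkeeping in the definition of $E$) are legitimate refinements that the paper leaves implicit.
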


\begin{proof}
As $E(x)$ is defined as the residual of $V_S(x)$ with respect to the HJB equation \eqref{HJB}, it satisfies a modified HJB equation in which the running cost is given by the augmented formulation \eqref{eq:augmented_cost}. The DPP formulation in \eqref{dpp_sdre} follows directly from the classical theory of HJB equations; see, for instance, \cite{BCD97} for further details.
 
\end{proof}

\begin{example}[Linear Quadratic Regulator]
A fundamental and straightforward example is the LQR. In this case, the system matrices $A(x)=A$ and $B(x)=B$ are constant, implying that the solution to the SDRE is also constant. Specifically, under these conditions, the HJB equation and the SDRE approach (which, in this case, reduces to the standard LQR formulation) coincide, yielding the exact optimal value function and the corresponding optimal feedback control law.
    
\end{example}

\begin{example}[Van Der Pol oscillator]
\label{ex:vdp}
We consider the dynamics

\[
\dot{x}=A(x)x+B(x)u,
\]
with
\[
A(x)=\begin{pmatrix}0 & 1\\[1mm]-1 & -0.5\,(1-x_1^2)\end{pmatrix},\quad B(x)=\begin{pmatrix}0\\ x_1\end{pmatrix}.
\]
and cost functional \eqref{cost} with
\[
Q=\begin{pmatrix}0 & 0\\[1mm]0 & 1\end{pmatrix},\quad R=1.
\]

A direct computation shows that the constant choice
\[
P(x)=I=\begin{pmatrix}1&0\\[1mm]0&1\end{pmatrix}
\]
satisfies the SDRE \eqref{sdre}. 

Since \(P(x)\) is constant, its derivatives vanish and the residual (which involves the derivatives $\{P_{x_i}(x)\}_i$) is identically zero. In this scenario, despite the state-dependent nature of the system matrices, full optimality is achieved as a consequence of the SDRE solution remaining constant.

\end{example}

\section{Error Bounds based on the Residual}

We have established that the residual of the SDRE solution with respect to the HJB equation can be quantified using equation \eqref{residual}, which relies on solving the Lyapunov equations for the derivatives $\{P_{x_i}\}_i$. In this section, our objective is to derive an error bound between the SDRE approximation \eqref{v_sdre} and the optimal value function \eqref{VF}, based on the residual $E(x)$. For a discounted optimal control problem, where the cost functional is defined as
\begin{equation*}\label{cost2}
 J(u(\cdot \,;x)) := \int_0^{+\infty} e^{-\rho t}\left(y(s)^{\top} Q y(s) + u^{\top}(s) R u(s)\right) \,ds\,,
\end{equation*} 

with $\rho>0$, it follows that the dynamic programming operator exhibits $L^\infty$-contractive (see, $e.g.$, \cite{dolcetta1983discrete}). This property enables the derivation of an error estimate of the form
$$
\Vert \tilde{V} - V \Vert_{\infty} \le \frac{\Vert E_{\tilde{V}} \Vert_{\infty}}{\rho},
$$
where $\tilde{V}$ represents an approximation of the value function, and $E_{\tilde{V}}$ denotes its residual with respect to the HJB equation (see \cite{grune1997adaptive,albert2002posteriori} for related approaches). However, this methodology is not applicable in our setting, as we consider an undiscounted framework where $\rho = 0$. Instead, we employ the dynamic programming principle \eqref{dpp_sdre} for the perturbed value function $V_S(x)$ in conjunction with the local asymptotic stability of the SDRE-controlled trajectory. A similar approach has been explored in the extended abstract \cite{grune2018hamiltonian}, where the authors establish a qualitative error bound based on the residual analysis.

\begin{proposition}
Under the assumptions of Theorem \ref{thm:stable}, the following error bound holds in some nonempty neighborhood of the origin
    \begin{equation}
|V_S(x) - V(x)| \leq \max\Bigl\{\int_0^{\infty} |E(y^*(t))| dt, \int_0^{\infty} |E(\tilde{y}_{S}(t))| dt\Bigr\}, 
\label{eq:bound}
\end{equation}
where $y^*(t)$ denotes the state trajectory corresponding to the optimal control \eqref{optc}, and $\tilde{y}_{S}(t)$  is the trajectory associated with the control \eqref{utilde_sdre}.

Furthermore, assume that both controlled trajectories are exponentially stable and remain in a neighborhood $\Omega$ of the origin. Then there exists a constant $C>0$ such that
\begin{equation}
|V_S(x)-V(x)|
\le
\frac{C}{\lambda}\,\|E\|_{*,\Omega}\,\|x\|,
\qquad x\in\Omega,
\label{eq:error_weighted}
\end{equation}
where
\[
\|E\|_{*,\Omega}:=\sup_{z\in\Omega\setminus\{0\}} \frac{|E(z)|}{\|z\|}.
\]

\end{proposition}

\begin{proof}

Applying the DPP for the optimal value function, we obtain that for any \(T\ge 0\)
\begin{equation}\label{eq:Vopt}
V(x)=\int_0^T \ell\bigl(y^*(s),u^*(s)\bigr)ds+V\bigl(y^*(T)\bigr),
\end{equation}
where \(y^*(t)\) is the state trajectory corresponding to the optimal control \(u^*(t)\).

Now, applying the same control \(u^*(\cdot)\) into the DPP for the SDRE approximation, we obtain
\begin{equation}\label{eq:VSDRE}
V_S(x)\le \int_0^T \Bigl[\ell\bigl(y^*(s),u^*(s)\bigr)+E\bigl(y^*(s)\bigr)\Bigr]ds+V_S\bigl(y^*(T)\bigr).
\end{equation}
By subtracting equation \eqref{eq:Vopt} from equation \eqref{eq:VSDRE} and utilizing the local asymptotic stability of the trajectory, the terminal terms vanish in the limit as $T \rightarrow + \infty$, leading to
\[
V_S(x)-V(x)\le \int_0^\infty E\bigl(y^*(s)\bigr)ds.
\]

A similar bound for $V(x)-V_S(x)$ can be derived by considering the control $\tilde{u}_{S}$ given in \eqref{utilde_sdre}, obtaining the bound \eqref{eq:bound}.


Now, by definition of $\|E\|_{*,\Omega}$,
\[
|E(z)|\le \|E\|_{*,\Omega}\|z\|,
\qquad z\in\Omega.
\]
Hence, using exponential stability,
\[
|E(y^*(t;x))|
\le
\|E\|_{*,\Omega}\|y^*(t;x)\|
\le
C \|E\|_{*,\Omega} e^{-\lambda t}\|x\|,
\]
and similarly for $\tilde y_S(t;x)$. Therefore,
\[
\int_0^\infty |E(y^*(t;x))|\,dt
\le
C \|E\|_{*,\Omega}\|x\|
\int_0^\infty e^{-\lambda t}\,dt
=
\frac{C}{\lambda}\|E\|_{*,\Omega}\|x\|.
\]
The same estimate holds along $\tilde y_S(t;x)$, which proves
\eqref{eq:error_weighted}.

\end{proof}

The norm $\|E\|_{*,\Omega}$ in the error bound \eqref{eq:error_weighted} can be evaluated within a compact domain $\Omega$ provided that both the controlled trajectories $y^*(t)$ and $\tilde{y}_S(t)$ remain within $\Omega$.

The error bound given in \eqref{eq:bound} is purely theoretical, as it relies on knowledge of the optimal trajectory $y^*(t)$. However, if $V_S(x) \le V(x)$, the inequality is obtained considering the integral of the residual along the trajectory computed via $\tilde{u}_{S}$ and it is a practical indicator, since it is fully computable. This approach involves computing the controlled trajectory $\tilde{y}_{S}(t)$ and evaluating the residual based on the methodology outlined in Section \ref{sec:SDRE}.  In the following example, we show that, for the given scenario, this indicator, derived from the SDRE trajectory, guarantees the validity of the upper bound.

\begin{example}(Van Der Pol oscillator)

Let us consider again the setting of Example \ref{ex:vdp}. Let us consider a different choice for semilinear form
\first{
\[
A(x)=\begin{pmatrix} -x_2 & 1+x_1\\[1mm]-1 & -0.5\,(1-x_1^2)\end{pmatrix},\quad B(x)=\begin{pmatrix}0\\ x_1\end{pmatrix}.
\]
}
In this scenario, the semilinear form does not represent an optimal solution, and consequently, \(E(x)\) remains nonzero for certain values of $x\in \mathbb{R}^2$. Let us examine the initial condition \(x^0 = (-0.5, 0.5)\) and compute the trajectory governed by the controller $\tilde{u}_S$. Subsequently, we evaluate the error \(|V(x) - V_S(x)|\) along the SDRE-controlled trajectory, together with the error bound as specified in \eqref{eq:bound}. The results are presented in the left panel of Figure \ref{pics:vdp}. The integral of the residuals computed along the optimal trajectory \(y^*(t)\) and the trajectory \(\tilde{y}_{S}(t)\) exhibit similar behavior. Therefore, only the integral corresponding to the latter is presented in Figure \ref{pics:vdp}.
 It is observed that as the state approaches the origin, the error diminishes towards zero, while the error bound remains closely aligned with the peaks of the error, indicating its effectiveness as an error predictor. Finally, in the right panel of Figure \ref{pics:vdp}, the trajectory obtained through the SDRE controller is depicted, where it is evident that the system stabilizes towards the origin.

\begin{figure}[H]	
\begin{center}    
	\includegraphics[width=0.49\textwidth]{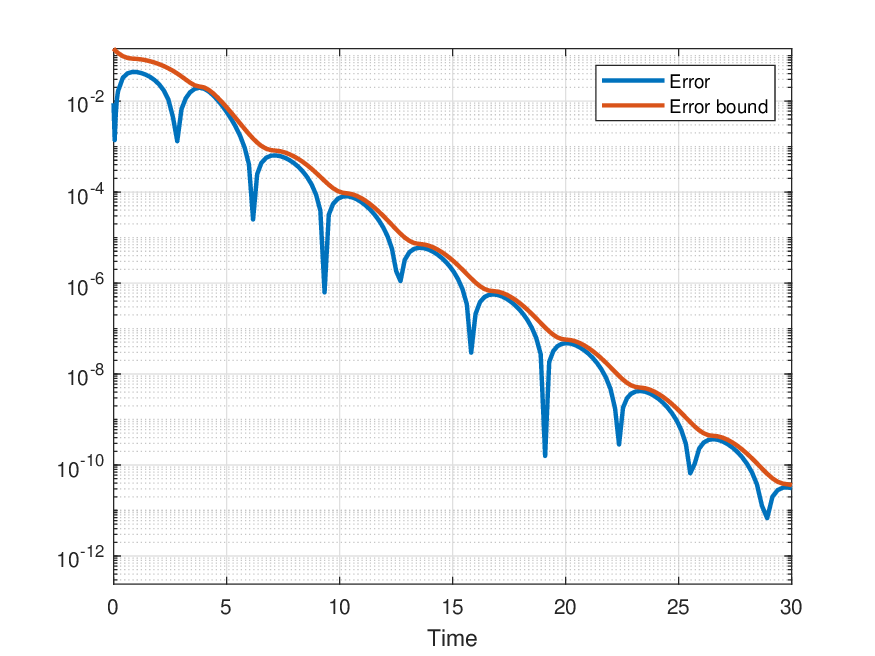}	
 	\includegraphics[width=0.49\textwidth]{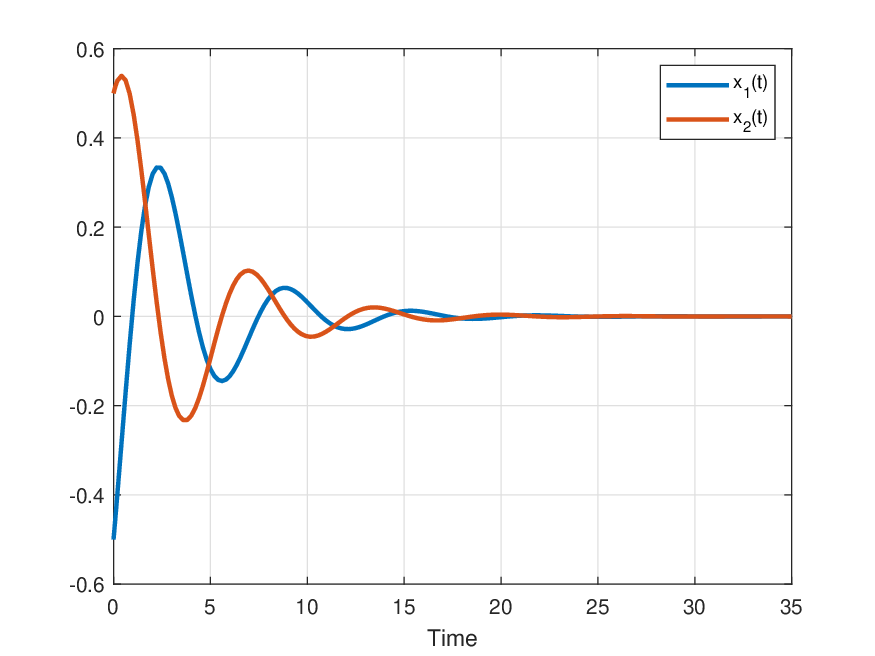}	
\caption{Van Der Pol example.  \emph{Left}: The temporal evolution of both the error between the value function and its SDRE approximation and the corresponding error bound as given in \eqref{eq:bound} along the trajectory induced by the SDRE controller. \emph{Right}: controlled solution using the SDRE controller.}
\label{pics:vdp}
\end{center}

\end{figure}

\end{example}

\subsection{Optimal Semilinear Form}

In the previous sections, we introduced the SDRE methodology, which relies on the semilinear representation \eqref{semilinear} of the dynamical system. This representation plays a fundamental role, as different choices of semilinear forms can lead to varying outcomes in the control strategy.
A natural question that arises is whether there exists at least one semilinear decomposition for which the residual term $E(x)$ vanishes entirely. This is a critical consideration, as the presence of $E(x)$ contributes to the sub-optimality of the SDRE approach. The following result provides an affirmative answer to this question by establishing the existence of such a semilinear form, under the assumption that the residual exhibits a sign change for two distinct semilinear formulations.

\begin{proposition}
\first{Let us consider the assumptions of Theorem \ref{thm:stable} and a fixed $x\in\mathbb{R}^n$.}
    Consider a fixed base semilinear form $A_0(x)$ satisfying $A_0(x)x=f(x)$, then the general semilinear form is parameterized as
\[
A(x)=A_0(x)+Z(x),
\]
where the free matrix $Z(x)$ is any matrix in
\[
\mathcal{Z}(x)=\{Z\in\mathbb{R}^{n\times n} : Zx=0\}.
\]
Given the mapping 
 \[
   \Phi:\mathcal{Z}(x) \to \mathbb{R},\quad \Phi(Z)=E(x;A_0+Z),
   \]
suppose that $\Phi(0_{n \times n}) \le 0$ and there exists $Z_1 \in\mathcal{Z}(x)$ such that $\Phi(Z_1)>0$.
   Then there exists a choice of $Z^*\in\mathcal{Z}(x)$ such that
\[
E(x;A_0(x)+Z^*)=0.
\]
   
\end{proposition}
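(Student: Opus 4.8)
The plan is a one-dimensional intermediate value argument along a segment in the affine space of admissible semilinear forms. The set $\mathcal{Z}(x)=\{Z\in\mathbb{R}^{n\times n}:Zx=0\}$ is a linear subspace of $\mathbb{R}^{n\times n}$, hence convex and in particular path-connected; the segment $t\mapsto tZ_1$, $t\in[0,1]$, stays in $\mathcal{Z}(x)$ and joins $0_{n\times n}$ to $Z_1$. Define $g(t):=\Phi(tZ_1)=E(x;A_0+tZ_1)$. Provided $g$ is continuous on $[0,1]$, the hypotheses $g(0)=\Phi(0_{n\times n})\le 0$ and $g(1)=\Phi(Z_1)>0$ give, by the intermediate value theorem, some $t^*\in[0,1]$ with $g(t^*)=0$; setting $Z^*:=t^*Z_1\in\mathcal{Z}(x)$ then yields $E(x;A_0(x)+Z^*)=0$, which is the claim. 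So the whole proof reduces to establishing the continuity (in fact smoothness) of $t\mapsto E(x;A_0+tZ_1)$.

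For that, recall from \eqref{residual} that $E(x;A)=\varphi(x)\cdot\bigl(A(x)x-\frac{1}{4}B(x)R^{-1}B(x)^\top\varphi(x)\bigr)$, where $[\varphi(x)]_i=x^\top P_{x_i}(x)x$; hence it suffices to show that $P=P(x;A)$ and its state-derivatives $P_{x_i}=P_{x_i}(x;A)$ depend continuously on $A$ along the segment. For $P$ itself, the standing stabilizability/detectability assumptions guarantee that the SDRE \eqref{sdre} admits a unique symmetric positive semidefinite stabilizing solution, and by the classical perturbation theory of the algebraic Riccati equation the stabilizing-solution map is smooth on the open set of coefficient data for which such a solution exists, so $t\mapsto P$ is continuous (using that the relevant pairs stay stabilizable/detectable along the segment — see below). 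For the derivatives, differentiating \eqref{sdre} in $x_i$ gives the Lyapunov equation \eqref{Lyap} for $P_{x_i}$ with closed-loop matrix $A_{cl}=A-BR^{-1}B^\top P$, which is Hurwitz precisely by the stabilizing property of $P$; since the coefficients $A_{cl}$ and $\Lambda_i$ of \eqref{Lyap} are built continuously from $A$, $A_{x_i}$, $B$, $B_{x_i}$ and $P$, and a Lyapunov equation with a Hurwitz coefficient has a unique solution depending continuously (indeed linearly, through a boundedly invertible operator) on its right-hand side, each $P_{x_i}$ is continuous in $t$. Composing, $\varphi(x)$ and then $E(x;A_0+tZ_1)$ are continuous (in fact real-analytic) in $t$, which is all that is needed.

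The main obstacle is not the intermediate value step but making the continuous-dependence claim watertight. Two points require care: first, the parameterization $A=A_0+Z$ must be read at the level of matrix-valued functions near $x$, so that the derivatives $A_{x_i}$ (and hence $\Lambda_i$) are actually controlled by the perturbation — for constant $Z$ this is automatic since $Z_{x_i}=0$, and in general one fixes a smooth extension of $Z$; second, and more importantly, one must ensure that the stabilizing Riccati solution persists all along $\{A_0+tZ_1\}_{t\in[0,1]}$, not merely at the endpoints, since a loss of stabilizability or detectability at an intermediate $t$ could make $\Phi$ undefined or discontinuous and void the IVT conclusion. A clean resolution is to take $Z_1$ small enough that the whole segment lies in the open set on which the stabilizing solution exists and depends smoothly on $A$ (possible because that set is open and contains $A_0$), or to include this persistence as an explicit standing hypothesis; with that caveat in place, the proof is exactly the sign-change argument sketched above.
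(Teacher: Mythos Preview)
Your proof is correct and follows essentially the same route as the paper: restrict $\Phi$ to the segment $t\mapsto tZ_1$ in the linear subspace $\mathcal{Z}(x)$, argue continuity of $t\mapsto E(x;A_0+tZ_1)$ from smooth dependence of the stabilizing Riccati solution on the coefficient data, and apply the intermediate value theorem. Your treatment is in fact more explicit than the paper's (which simply cites smooth dependence results for $P$), and the caveat you raise about persistence of stabilizability/detectability along the entire segment is a genuine subtlety that the paper leaves implicit.
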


\begin{proof}
    Since every semilinear form $A(x)$ satisfying $A(x)x=f(x)$ can be written as
   \[
   A(x)=A_0(x)+Z(x),
   \]
   with $Z(x)x=0$, the freedom in choosing $A(x)$ is equivalent to the freedom in choosing $Z(x)$ in the vector space
   \[
   \mathcal{Z}(x)=\{Z\in\mathbb{R}^{n\times n}: Zx=0\}.
   \]
   Note that if $x\neq 0$, then $\mathcal{Z}(x)$ is a linear subspace of dimension $n(n-1)$. With the above parameterization, the residual in the HJB equation can be viewed as a function
   \[
   \Phi:\mathcal{Z}(x)\to\mathbb{R},\quad \Phi(Z)=E(x;A_0(x)+Z).
   \]
  This mapping is continuously differentiable, as it results from the composition of smooth functions. Indeed, the residual $E(x;A_0(x)+Z)$ involves smooth operations and the SDRE solution $P(x;A_0(x)+Z)$ depends smoothly on its parameters (see, $e.g.$, \cite{ran1988parameter,sontag2013mathematical}).
  In the case $\Phi(0_{n \times n})=0$, the result follows by setting $Z^*=0_{n \times n}$. Otherwise, assume \(\Phi(0_{n \times n}) < 0\). By hypothesis, there exists \( Z_1 \in \mathcal{Z}(x) \) with \(\Phi(Z_1) > 0\).  
   Define the function \( f : [0,1] \to \mathbb{R} \) by
   $ 
   f(t) = \Phi(t Z_1).
   $
   Since scalar multiplication in $\mathcal{Z}(x)$ is continuous and \(\Phi\) is continuous, the composition \(f\) is continuous on \([0,1]\).  
   Notice that:
   \[
   f(0) = \Phi(0_{n \times n}) < 0 \quad \text{and} \quad f(1) = \Phi(Z_1) > 0.
   \]
   By the intermediate value theorem, there exists a \( t_0 \in (0,1) \) such that
   $
   f(t_0) = 0.
   $
   Setting \( Z^* = t_0 Z_1 \), which belongs to \(\mathcal{Z}(x)\) because \(\mathcal{Z}(x)\) is a linear subspace and is closed under scalar multiplication, we obtain
   \[
   \Phi(Z^*) = \Phi(t_0 Z_1) = 0,
   \]
   completing the proof.
    
   \end{proof}


The aforementioned result is not merely theoretical, but also provides a systematic approach for constructing the optimal semilinear form. Given a reference semilinear form $A_0(x)$ and an alternative semilinear form $A_0(x)+Z(x)$, with $Z(x) \in \mathcal{Z}(x)$ and the corresponding residual $E(x)$ exhibits a sign change, an optimal semilinear form can be obtained by considering the continuous path $A_0(x)+tZ(x)$ for $t \in [0,1]$ and identifying the point where the residual vanishes.
For instance, fixing two pairs of indices $(i_1,j_1)$ and $(i_1,j_2)$, one may consider the following class of matrices in the set $\mathcal{Z}(x)$
\begin{equation}
[Z(x)]_{i,j}= \begin{cases}  x_{j_2} & (i,j)=(i_1,j_1),  \\
- x_{j_1} & (i,j)=(i_1,j_2), \\
0 & otherwise .
\end{cases}
\label{eq:Zx}
\end{equation}
It is straightforward to check that $Z(x)x = 0$ for all $x \in \mathbb{R}^d$. 
\first{A theoretical characterization of when such semilinear forms exist is generally difficult. However, the construction becomes possible whenever the admissible class $\mathcal{Z}(x)$ contains at least one perturbation $Z(x)$ for which the associated residual, evaluated along the path $A_0(x) + t Z(x)$, changes sign. In practice, this can be assessed by introducing several (possibly random) perturbations of the baseline semilinear form and verifying whether such a sign change occurs.
}
If identifying the sign-changing semilinear form proves challenging, the problem can be reformulated from an optimization perspective. Specifically, for a fixed $x \in \mathbb{R}^d$ and a base semilinear form $A_0(x)$, one can seek to minimize the residual norm by solving the optimization problem:
\begin{equation}
\min_{Z \in \mathcal{Z}(x)} | E(x;A_0+Z)|^2.
\label{min_E}
\end{equation}
One approach to tackling this problem is through a parametrization of the vector space $\mathcal{Z}(x)$. In \cite{jones2020} the authors attempt to solve the minimization problem \eqref{min_E} by considering a first-order Taylor expansion of the parametrization for $\mathcal{Z}(x)$, while in \cite{dolgov2022optimizing}, the authors restrict the minimization to a class of $\mathcal{Z}(x)$ formed using the class of linear perturbation described by \eqref{eq:Zx}. In both cases, the proposed numerical experiments demonstrate that the technique can generate a more stabilizing feedback. However, these methodologies are affected by dimensionality constraints and are primarily suitable for low-dimensional systems. \second{
\begin{remark}
Pursuing optimal or near-optimal semilinear forms remains possible in high dimension by exploiting additional structure.  
One option is to restrict $Z(x)$ to sparse, block-diagonal, or low-rank, whose complexity scales linearly rather than quadratically with the state dimension; see, e.g., \cite{benner2015survey}.  
Another strategy relies on projection-based model reduction: one computes an optimal semilinear form in a reduced-order model and lifts it back to the full space, following standard reduced-order optimal control techniques \cite{hinze2005podcontrol}.   

Alternatively, randomized and sampling-based approaches can be employed: by probing a small number of random directions in $\mathcal{Z}(x)$, one can efficiently detect sign changes of the residual $E(x;A_0+tZ)$ even in very large dimensions, leveraging ideas from high-dimensional randomized numerical linear algebra \cite{halko2011finding}.  
Finally, data-driven surrogate models may approximate the mapping $Z \mapsto E(x;A_0+Z)$ at fixed $x$, reducing the high-dimensional optimization to a learned low-complexity problem. Such approaches have been successful in the approximation of high-dimensional HJB-type equations \cite{onken2022neural,dolgov2023data}.
\end{remark}
}

\begin{example}[Allen-Cahn equation]
   We consider the following nonlinear Allen-Cahn PDE with homogeneous Neumann boundary conditions:
\begin{equation*}
\left\{ \begin{array}{l}
\partial_t y(t,x) = \sigma \partial_{xx} y(t,x) + y(t,x) (1-y(t,x)^2) + u(t,x),  \\
 y(0,x)=y_0(x),
\end{array} \right.
\end{equation*}
where $x \in [0,1]$ and $t \in (0,+\infty)$. The associated cost functional is given by
$$
J(u,y_0) = \int_0^{\infty}  \int_0^1 (|y(t,x)|^2 +  \tilde{\gamma}\|u(t,x)|^2) dx \, dt \,.
$$
By discretizing the PDE using finite difference schemes with $d$ grid points, we obtain the following system of ODEs:
\begin{align*}
\dot{y}(s)=A_0(y) y(s) + u(s),
\end{align*}
where
\begin{align*}
A_0(y) = \sigma A^{\Delta} + I_d -  diag(y \odot y),\quad y \in \mathbb{R}^d, 
\end{align*}
with $\odot$ denoting the Hadamard \first{(element-wise)} product, $I_d \in \mathbb{R}^{d \times d}$ being the identity matrix and $A^{\Delta}$ arising from the discretization of the Laplace operator with homogeneous Neumann boundary conditions. We set $\sigma = 10^{-q}$,  $d=100$, $\tilde{\gamma} = 0.1$ and $y_0(x) = \cos(\pi x)$. Our objective is to determine an optimal semilinear form $A(x)$ such that the residual $E(y_0;A(x))$ vanishes. To this end, we introduce a linear perturbation as defined in \eqref{eq:Zx}, specifically choosing $i_1=j_1=1$ and $j_2=2$, meaning that we perturb the first two entries of the first row of the semilinear form. We consider the parameterized form $A(x;\alpha) = A_0(x)+ \alpha Z(x)$, with $\alpha \in \mathbb{R}$. Left panel of Figure \ref{pics:AC} illustrates the behavior of the residual $E(y_0;\alpha)$ as the parameter $\alpha$ varies over the interval $[-10,10]$. The results exhibit a quadratic-like behavior, with at least one zero in the interval $(9,10)$, where a change of sign is observed. The root can be computed using the MATLAB function \texttt{fzero}, yielding $\alpha^* \approx 9.23 $, at which the residual attains the value $5.5 \cdot 10^{-15}$.
A similar result is obtained when considering the initial condition $y_0(x) = \sin (\pi x)$, as shown in the right panel of Figure \ref{pics:AC}, where the zero occurs at $\alpha^* \approx 5.74 $.

\begin{figure}[H]	
\begin{center}    
	\includegraphics[width=0.49\textwidth]{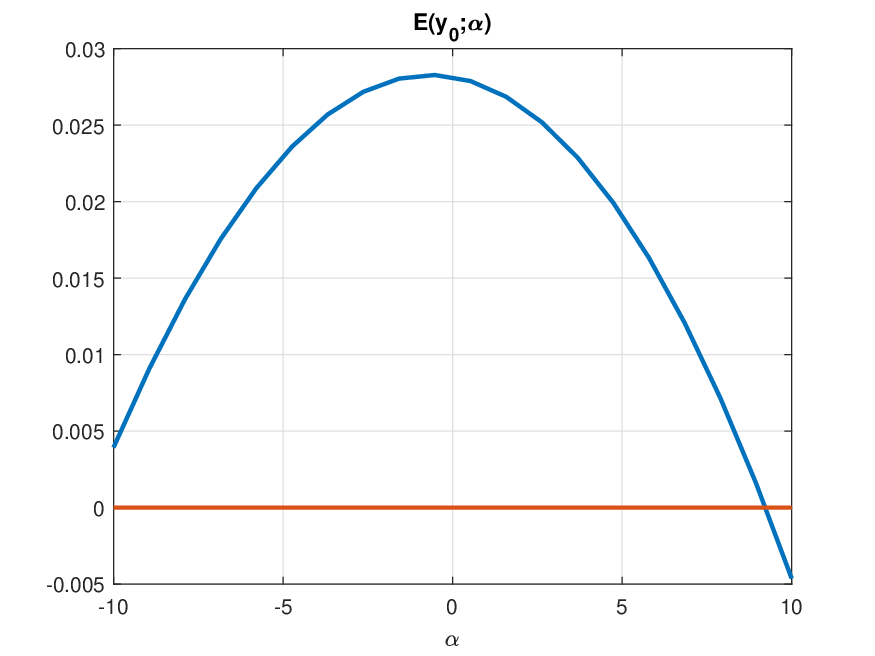}	
    	\includegraphics[width=0.49\textwidth]{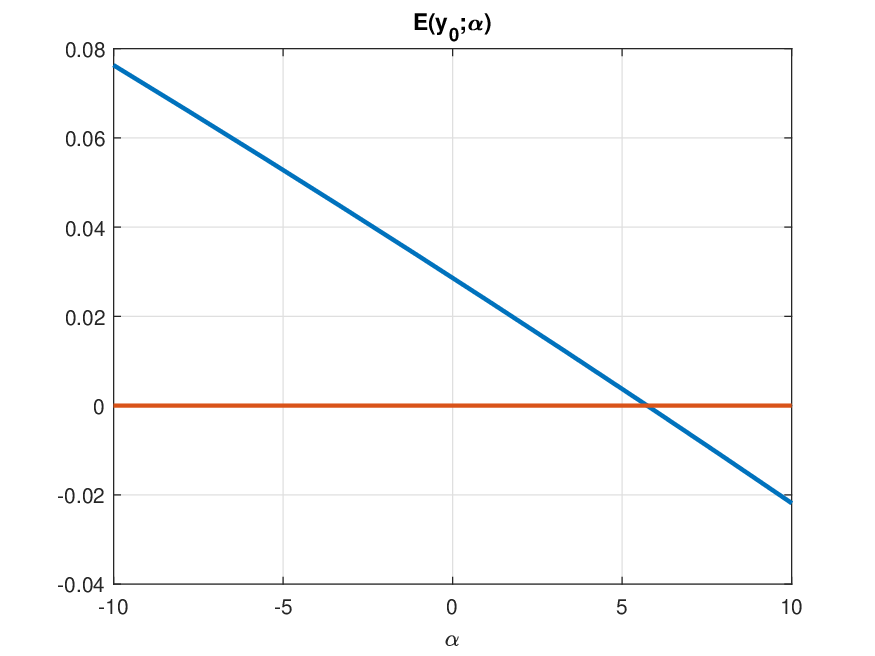}	
	
\caption{Allen-Cahn example. Variation of the residual as a function of the parameter $\alpha$ in the semilinear form, demonstrating the existence of at least one zero for $y_0(x) = \cos (\pi x)$ (left) and $y_0(x) = \sin (\pi x)$ (right).}
\label{pics:AC}
\end{center}

\end{figure}
\end{example}

\section{Numerical Methods for the Approximation of the SDRE}

In this section, we examine the numerical approximation of the SDRE methodology, highlighting that its implementation necessitates the repeated solution of the CAREs throughout the time integration process. This study investigates and compares two distinct numerical approaches for solving the sequence of CAREs, both of which adhere to Algorithm \ref{alg:sdre}.
For simplicity, we focus on the case $B(x)=B$. \first{We work under the assumptions of Theorem \ref{thm:stable}: 
$f(x)$ and $\frac{\partial f(x)}{\partial x_j} \ (j=1,\dots,n) $ are continuous near the origin,
$A(x)$ is continuous, and the pairs $ (A(x),B)$ and $(A(x),Q^{1/2})$  are, respectively, stabilizable and detectable
in a nonempty neighborhood of the origin.
}
The first approach, known as the \emph{offline–online} method, employs a first-order approximation of the solution and precomputes specific components to mitigate computational costs during real-time execution. The second approach employs an iterative scheme, referred to as the \emph{Newton–Kleinman} method, in which the Riccati solution from the previous time step is used as an initial guess for the subsequent iteration. These two techniques are evaluated in terms of computational efficiency and accuracy through a numerical experiment involving the optimal control of a nonlinear reaction-diffusion PDE.

\subsection{Offline–Online Approach}

Suppose that the semilinear form admits the decomposition
\begin{equation}
  A(x) = A_0 + \sum_{j=1}^{r} f_j(x) A_j = A_0 + \tilde{A}(x),
  \label{eq:A_decomp}
\end{equation}
where $A_0,A_j \in \mathbb{R}^{d \times d}$ and $f_j: \mathbb{R}^d \rightarrow \mathbb{R}$ for $j \in \{1,\dots, r\}$. This formulation is commonly encountered in the optimal control of PDEs, where the system dynamics include both a linear operator (e.g., diffusion or convection) and a nonlinear term, such as a reaction component.

Within this framework, the solution to the Riccati equation can be approximated as
\begin{equation}
  P(x) \approx P_0 + \sum_{j=1}^{r} P_j f_j(x) = P_0 + W(x),
  \label{eq:pi_approx}
\end{equation}
where:
\begin{itemize}
  \item \(P_0\) is computed by solving the CARE for the linearized system corresponding to \(A_0\):
    \begin{equation}
    A_0^\top P_0 + P_0 A_0 - P_0 S P_0 + Q = 0,
    \label{P0}
        \end{equation}
    with $
    S = B R^{-1}B^\top.
   $
  \item Each \(P_j\) is obtained by solving an associated Lyapunov equation
    \[
    P_j C_0 + C_0^\top P_j + Q_j = 0, \quad j=1,\ldots,r,
    \]
    with \(C_0 = A_0 - SP_0\) and \(Q_j = P_0 A_j + A_j^\top P_0\).
\end{itemize}

This methodology remains computationally feasible when the number of scalar nonlinear terms, $r$ is small. However, in many practical scenarios $r$ is comparable to the system dimension $d$, which can be arbitrarily large, making the direct implementation impractical due to the excessive computational burden.
When both $r$ and $d$ are large, an \emph{offline–online} strategy can be adopted to mitigate computational complexity. During the offline phase, the matrix $P_0$ is precomputed by solving \eqref{P0}. In the online phase, for a given state \(x\), the term $W(x)$ is computed by solving a single Lyapunov equation:
\begin{equation}
  W(x) C_0 + C_0^\top W(x) + Q(x) = 0,
  \label{eq:W_lyap}
\end{equation}

where 
$$
Q(x) = \sum_{j=1}^{r} Q_j f_j(x) = P_0 \tilde{A}(x) + \tilde{A}(x)^\top P_0.
$$
The resulting feedback law is then given by
\begin{equation}
  u_{O}(x) = -R^{-1}B^\top\Bigl( P_0 + W(x) \Bigr)x.
  \label{eq:feedback_offline_online}
\end{equation}

\begin{algorithm}
\caption{Offline-online SDRE}
\label{alg:off_onl}{
\begin{algorithmic}[1]
\Statex{\textbf{Input:} $A(\cdot), B, Q,R$, initial conditions $ y_0$, time discretization $\{t_i\}_{i=0}^{N_T}$ }
\Statex{\textbf{Output:} Controlled trajectory $\{y_O(t_i)\}_{i=0}^{N_T}$}
\Statex{}
\Statex \textbf{Offline phase}
\State Compute the solution $P_0$ from \eqref{P0}
\Statex{}
\Statex \textbf{Online phase}
\State{$y_O(t_0):= y_0$}
\For{$n = 0, \dots, N_T-1$}
\State{Compute $W(y_O(t_{n}))$ from \eqref{eq:W_lyap} }
\State{Build the control $u_O(y_O(t_{n}))$ from \eqref{eq:feedback_offline_online}}
\State{Integrate the system \eqref{semilinear} to obtain $y_O(t_{n+1})$}       
\EndFor 
\end{algorithmic}}
\end{algorithm}

The method is sketched in Algorithm \ref{alg:off_onl}.
This approach is attractive in real–time applications because it decouples the heavy computation (solved offline) from the online computation that is reduced to the resolution of one Lyapunov equation per time step. However, the stability of the closed-loop matrix $C(x) = A(x)-S\Bigl(P_0+W(x)\Bigr)$ is not inherently guaranteed. The following proposition provides a rigorous framework under which this approach ensures the stability of the closed-loop matrix.

\begin{proposition}\label{prop:stability}
Given the semilinear form \eqref{eq:A_decomp} and $P_0$ the solution of the CARE \eqref{P0}. Let $W(x)$ be the unique solution of the Lyapunov equation \eqref{eq:W_lyap} and let $C_0$ be diagonalizable with decomposition $C_0 = V \Lambda V^{-1}$.

Then if
\begin{equation}\label{eq:bound2}
\left\|\tilde{A}(x)\right\| \left(1+\|S\|\frac{M^2}{\alpha}\left\|P_0\right\|\right) < \alpha,
\end{equation}
where $\alpha = \min_{\lambda \in \sigma(C_0)} |\operatorname{Re}(\lambda)|$, with $\operatorname{Re}(\cdot)$ denoting the real part operator, and $M = cond(V) = \|V \| \| V^{-1} \|$,
the closed-loop matrix $C(x) = A(x)-S\Bigl(P_0+W(x)\Bigr)$ is Hurwitz \first{(i.e.\ with eigenvalues with negative real part)}.
\end{proposition}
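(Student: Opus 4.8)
The plan is to regard the closed-loop matrix $C(x)$ as a perturbation of the stabilized linear closed-loop matrix $C_0 = A_0 - SP_0$. Using the decomposition \eqref{eq:A_decomp}, write $C(x) = A_0 + \tilde{A}(x) - SP_0 - SW(x) = C_0 + E(x)$ with $E(x) := \tilde{A}(x) - SW(x)$. Since $P_0$ is the stabilizing solution of the CARE \eqref{P0}, the matrix $C_0$ is Hurwitz, and combined with the hypothesis $C_0 = V\Lambda V^{-1}$ this yields the exponential bound $\|e^{C_0 t}\| \le M e^{-\alpha t}$, where $M = \mathrm{cond}(V)$ and $\alpha = \min_{\lambda \in \sigma(C_0)} |\mathrm{Re}(\lambda)|$. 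The whole statement then reduces to showing that $E(x)$ is a perturbation small enough, relative to the spectral gap $\alpha$ and the conditioning $M$, that $C_0 + E(x)$ keeps its spectrum in the open left half-plane.

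The first concrete step is to bound $\|W(x)\|$. Since $C_0$ is Hurwitz, the Lyapunov equation \eqref{eq:W_lyap} has the unique solution $W(x) = \int_0^{\infty} e^{C_0^\top t}\, Q(x)\, e^{C_0 t}\, dt$, so $\|W(x)\| \le \|Q(x)\| \int_0^\infty M^2 e^{-2\alpha t}\, dt = \tfrac{M^2}{2\alpha}\|Q(x)\|$. Using $Q(x) = P_0\tilde{A}(x) + \tilde{A}(x)^\top P_0$ gives $\|Q(x)\| \le 2\|P_0\|\,\|\tilde{A}(x)\|$, hence $\|W(x)\| \le \tfrac{M^2}{\alpha}\|P_0\|\,\|\tilde{A}(x)\|$. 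Combining with $\|E(x)\| \le \|\tilde{A}(x)\| + \|S\|\,\|W(x)\|$ produces exactly the left-hand side of \eqref{eq:bound2}, namely $\|E(x)\| \le \|\tilde{A}(x)\|\bigl(1 + \|S\|\tfrac{M^2}{\alpha}\|P_0\|\bigr)$.

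The final step is an eigenvalue-perturbation argument for Hurwitz matrices. The natural tool is the resolvent identity (equivalently, Bauer--Fike, since $C_0$ is diagonalizable): if $\mu$ were an eigenvalue of $C(x) = C_0 + E(x)$ with $\mathrm{Re}(\mu) \ge 0$, then $\mu \notin \sigma(C_0)$, the relation $(\mu I - C_0)v = E(x)v$ for an eigenvector $v$ forces $1 \le \|(\mu I - C_0)^{-1}\|\,\|E(x)\|$, and estimating the resolvent through the diagonalization gives $\|(\mu I - C_0)^{-1}\| \le M\max_i|\mu-\lambda_i|^{-1} \le M/\alpha$, because $|\mu - \lambda_i| \ge |\mathrm{Re}(\mu) - \mathrm{Re}(\lambda_i)| \ge \alpha$. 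Feeding in the bound on $\|E(x)\|$ then contradicts the smallness condition, so $C(x)$ has no eigenvalue with nonnegative real part. I expect this last step to be the main obstacle: the passage from a norm bound on the perturbation $E(x)$ to a spectral-location statement is exactly where the diagonalizability hypothesis is used and where the conditioning constant $M$ must be tracked carefully through \eqref{eq:bound2}; the rest is essentially the $\|W(x)\|$ estimate and bookkeeping.
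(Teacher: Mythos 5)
Your first two steps reproduce the paper's computation exactly: the integral representation of $W(x)$, the bound $\|e^{C_0t}\|\le Me^{-\alpha t}$, the estimate $\|W(x)\|\le \tfrac{M^2}{\alpha}\|P_0\|\|\tilde A(x)\|$, and the resulting bound on the perturbation $E(x)=\tilde A(x)-SW(x)$ whose norm is majorized by the left-hand side of \eqref{eq:bound2}. The divergence is in the final spectral step, and it is there that your argument does not close. The resolvent/Bauer--Fike estimate you correctly set up gives $\|(\mu I-C_0)^{-1}\|\le M/\alpha$ for $\mathrm{Re}(\mu)\ge 0$, so the eigenvalue relation $1\le \|(\mu I-C_0)^{-1}\|\,\|E(x)\|$ produces a contradiction only if $M\|E(x)\|<\alpha$. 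The hypothesis \eqref{eq:bound2} supplies only $\|E(x)\|<\alpha$, which is weaker by the factor $M=\mathrm{cond}(V)\ge 1$; for $M>1$ the chain of inequalities ends at $1\le M\|E(x)\|/\alpha<M$, which is no contradiction. As written, your proof therefore establishes the proposition only with $\alpha$ replaced by $\alpha/M$ on the right-hand side of \eqref{eq:bound2} (equivalently, with an extra factor $M$ multiplying the left-hand side).

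It is worth saying that this is not a defect you introduced: the paper's own proof simply asserts that $\|C_1(x)\|<\alpha$ is ``a sufficient condition'' for $C_0+C_1(x)$ to be Hurwitz, with no justification, and that assertion is false for non-normal $C_0$. For instance, $C_0=\bigl(\begin{smallmatrix}-1 & K\\ 0 & -2\end{smallmatrix}\bigr)$ has $\alpha=1$, yet the perturbation with single nonzero entry $\epsilon=2/K$ in position $(2,1)$ already places an eigenvalue at the origin, and $\epsilon\ll\alpha$ for large $K$ (precisely the regime where $M$ is large). So your more careful route exposes that the conditioning factor $M$ cannot be dropped in the last step; to make the argument rigorous you should either strengthen the hypothesis to $M\|\tilde A(x)\|\bigl(1+\|S\|\tfrac{M^2}{\alpha}\|P_0\|\bigr)<\alpha$, or restrict to the case where $C_0$ is normal so that $M=1$ and your contradiction goes through as stated.
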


\begin{proof}

The solution $W(x)$ of the Lyapunov equation \eqref{eq:W_lyap} can be expressed as
$$
W(x)=- \int_{0}^{\infty} e^{C_0^\top t} Q(x) e^{C_0 t}\,dt.
$$

Since $P_0$ is the unique solution of the CARE \eqref{P0}, the corresponding closed-loop matrix $C_0$ is Hurwitz. Consequently,
$$
\|e^{C_0 t}\| = \|V e^{\Lambda t } V^{-1}\| \le M e^{-\alpha t},
$$
with $\alpha = \min_{\lambda \in \sigma(C_0)} |\operatorname{Re}(\lambda)|$.
Applying this bound to the integral representation of $W(x)$, we obtain
$$
\|W(x)\| \le M^2 \| Q(x) \| \int_0^{+\infty}  e^{-2 \alpha t} \,dt = \frac{M^2}{2\alpha}  \| Q(x) \|. 
$$
Furthermore, we note that 
$$
\|Q(x) \| \le 2 \|P_0\| \|\tilde{A}(x)\|.
$$

Noting that $C(x) = C_0 + C_1(x) = C_0+ \tilde{A}(x)-S W(x)$,
a sufficient condition for the closed-loop matrix $C(x)$ to be stable is that the norm of $C_1(x)$ remains strictly less than the stability margin $\alpha$ of $C_0$. By applying the derived bound on $\|W(x)\|$ and $\|Q(x)\|$, this requirement leads to the computable stability criterion given by \eqref{eq:bound2}.
 
\end{proof}

The result presented above establishes a condition under which the closed-loop matrix $C(x)$ remains Hurwitz, thereby ensuring the synthesis of a stabilizing feedback law. However, if the norm of the nonlinear perturbation $\|\tilde{A}(x)\|$ is excessively large, the stability of the closed-loop matrix may be compromised. In such cases, the resulting controlled trajectory may fail to converge to the desired equilibrium. In the following section, we introduce an alternative approach based on an iterative scheme, which, although possibly requiring the solution of additional Lyapunov equations, ensures the stability of the closed-loop matrix under less restrictive conditions.

\subsection{Newton–Kleinman Method}
An alternative approach relies on the resolution of the full SDRE \eqref{sdre} at each state of the dynamical system, thereby avoiding the approximation error introduced by the decomposition in \eqref{eq:pi_approx}. Since the SDRE must be solved at multiple time steps, an efficient strategy involves employing an iterative scheme, such as the Newton–Kleinman (NK) method.  At a given time step $t_n$ for a fixed state $x$, the NK method refines an initial guess \(P^{(0)}(x)\) iteratively, updating it according to the recurrence relation
\begin{equation}
  P^{(k+1)}(x) = P^{(k)}(x) + \Delta P(x),
  \label{eq:NK_update}
\end{equation}
where the correction \(\Delta P(x)\) is computed by solving the Lyapunov equation
\begin{equation}
  (A(x)-SP^{(k)}(x) )^\top \Delta P(x) + \Delta P(x) (A(x)-SP^{(k)}(x) )  = -\mathcal{R}(P^{(k)},x),
  \label{eq:NK_lyap}
\end{equation}
with the residual
\[
\mathcal{R}(P^{(k)},x) = A^\top(x) P^{(k)}(x) + P^{(k)}(x) A(x) - P^{(k)}(x) S P^{(k)}(x) + Q.
\]
The procedure is repeated until the residual $\mathcal{R}(P^{(\tilde{k})},x)$ falls below a prescribed threshold or the maximum number of iterations is reached. Subsequently, we assign $P_{n}(x):=P^{(\tilde{k})}(x)$.
In the context of time–varying nonlinear systems, one can \emph{warm start} the NK iteration by using the Riccati solution from the previous time step as the initial guess \(P^{(0)}\). The technique is outlined in Algorithm \ref{alg:CNK}.
This strategy, which leverages the temporal continuity of the Riccati solution to enhance computational efficiency and convergence, is referred to as the \emph{cascade Newton–Kleinman} (\texttt{C-NK}) method. In \cite{saluzzi2025dynamical}, the authors demonstrate that if the time step is sufficiently small, bounded by a computable threshold, then the solution from the previous iteration $P(y_S(t_{n-1}))$ stabilizes the pair $(A(y_S(t_n)),B)$, $i.e.$, the closed-loop matrix $A(y_S(t_n))-S P(y_S(t_{n-1}))$ is stable, ensuring the convergence of the NK scheme (see \cite{kleinman1968} for a precise formulation).

The warm start strategy can be applied only from the second time step onward, as the initial Riccati solution at the first time step cannot be obtained from a previous iteration. Consequently, the first instance of the SDRE \eqref{sdre} is solved using a direct method, such as the MATLAB function \texttt{icare}. This step is analogous to the first phase of the offline-online method, where the initial Riccati equation \eqref{P0} must be solved. \first{The method does not require the input matrix $B$ to be constant; it can be applied as well when $B$ is state dependent, namely $B = B(x)$.
}

The key distinction between these two methods lies in their computational requirements: while the offline-online approach necessitates solving a single Lyapunov equation per time step, the cascade Newton–Kleinman scheme may require multiple iterations to achieve convergence. The availability of a suitable initial guess in the \texttt{C-NK} method not only accelerates convergence but also ensures a stabilizing initial condition, provided that the time step is sufficiently small.
In the following section, we present a numerical experiment demonstrating the effectiveness of this technique.

\begin{algorithm}
\caption{\texttt{C-NK} SDRE}
\label{alg:CNK}{
\begin{algorithmic}[1]
\Statex{\textbf{Input:} $A(\cdot), B, Q,R$, initial conditions $ y_0$, time discretization $\{t_i\}_{i=0}^{N_T}$}
\Statex{\textbf{Output:} Controlled trajectory $\{y_S(t_i)\}_{i=0}^{N_T}$}
\State{$y_S(t_0):= y_0$}
\State{Compute $P_0 = P(y_0)$ from \eqref{sdre}}
\State{Compute the control $u_S(y_0)$ from \eqref{control_sdre}}
\State{Integrate the system \eqref{semilinear} to obtain $y_S(t_{1})$}
\For{$n = 1, \dots, N_T-1$}
\State{Compute $P_n$ from \eqref{eq:NK_update}-\eqref{eq:NK_lyap} with guess $P_{n-1}$ }
\State{Build the control $u_S(y(t_{n}))$ from \eqref{control_sdre}}
\State{Integrate the system \eqref{semilinear} to obtain $y_S(t_{n+1})$}       
\EndFor 
\end{algorithmic}}
\end{algorithm}

\subsection{Numerical Example}

Throughout this discussion, we denote by $\chi_{\omega}(\cdot)$ the indicator function over the subdomain $\omega \subset \mathbb{R}$. The interval $\omega_c$ specifies the region where the control input is applied, while $\omega_o$ designates the observation domain where the system dynamics are monitored.
We consider the nonlinear Zeldovich-type equation with homogeneous Neumann boundary conditions:
\begin{equation*}
\left\{ \begin{array}{l}
\partial_t y(t,x) = \sigma \partial_{xx} y(t,x) + \nu y(t,x)+ \mu y(t,x)^2 (1-y(t,x)) + u(t,x)\chi_{\omega_c}(x),  \\
 y(0,x)=y_0(x),
\end{array} \right.
\label{AC}
\end{equation*}
with $x \in [0,1]$ and $t \in (0,+\infty)$. The corresponding cost functional is given by
$$
 J(u,y_0) = \int_0^{+\infty} \int_{\omega_o} | y(t,x)|^2 \, dx + \tilde{\gamma} \int_{\omega_c} | u(t,x) |^2 \, dx \, dt.
$$
By discretizing the PDE using a finite difference scheme with $d$ grid points, we obtain the following system of ODEs 
\begin{align*}
\dot{y}(s)=A(y) y(s) + u(s),
\end{align*}
with
$$
A(y) = A_0 +  \mu \, diag(y-y \odot y),\quad y \in \mathbb{R}^d,
$$
$$
A_0 = \sigma A^{\Delta} + \nu I_d.
$$
Here, $\odot$ denotes the Hadamard product, $I_d \in \mathbb{R}^{d \times d}$ is the identity matrix and $A^{\Delta}$ is the discrete Laplacian matrix arising from the finite difference approximation with Neumann boundary conditions. We set the discretization dimension to $d =100$ and initialize the system with $y_0(x) = \cos (\pi x)$. The dynamical system is discretized using a semi-implicit first-order integration scheme, where the diffusion term is treated implicitly, while the reaction and control terms are handled explicitly. A fixed time step of $\Delta t = 0.02$ is employed, with the final simulation time set to $T=4$, leading to a total of $200$ time steps. We conduct a comparative analysis of the offline-online method (Algorithm \ref{alg:off_onl}), the \texttt{C-NK} method (Algorithm \ref{alg:CNK}), and the "\texttt{icare} approach", which is based on Algorithm \ref{alg:sdre} and utilizes the MATLAB function \texttt{icare} to solve the Riccati equations. In the \texttt{C-NK} method, the stopping criterion for the residual is set to $10^{-5}$. Two different parameter configurations are considered:
\begin{enumerate}
    \item $\sigma = 0.2$, $\tilde{\gamma} = 0.01$, $\nu = 0.5$ where the control is applied in $\omega_c= [0.2,0.5] $ and the observation is restricted to $ \omega_o = [0.5,0.7]$, leading to a partial-domain control and monitoring setup.
    \item $\sigma = 10^{-2}$, $\tilde{\gamma} = 0.1$, $\nu = 0.5$ with full-domain control and observation regions $\omega_c = \omega_o = [0,1]$, meaning that both the control and observation mechanisms cover the entire spatial domain.
\end{enumerate}
These two scenarios exhibit distinct behaviors in the decay of the singular values of the Riccati solution. Fixing $\mu=1$, we solve the corresponding SDRE at the initial condition $y_0$. The decays of the singular values for both test cases are illustrated in the left panel of Figure \ref{pics:sing_value}. In the first case, the singular values exhibit a rapid decay, suggesting a low-rank structure in the solution. In the second case, the singular values decay slowly, indicating that the Riccati solution is full-rank. For high-dimensional problems, these two cases require different numerical treatments. For low-rank solutions, there exists a substantial body of literature on efficient methods for solving low-rank Lyapunov and Riccati equations. For further details, we refer the reader to \cite{simoncini2014two,simoncini2016computational,benner2008numerical}. Conversely, in the full-rank setting, the sparsity and potential banded structure of the solution can be effectively leveraged, as illustrated in the right panel of Figure \ref{pics:sing_value}. For further details, the reader is referred to \cite{palitta2018,massei2024data,haber2018sparsity} and the references therein. A comprehensive analysis and implementation of these advanced techniques fall beyond the scope of this study. Given the problem dimension considered, the Lyapunov equations are solved using the MATLAB function \texttt{lyap}, while the initial Riccati equation is computed employing the MATLAB command \texttt{icare}.

\begin{figure}[H]	
\begin{center}    
	\includegraphics[width=0.49\textwidth]{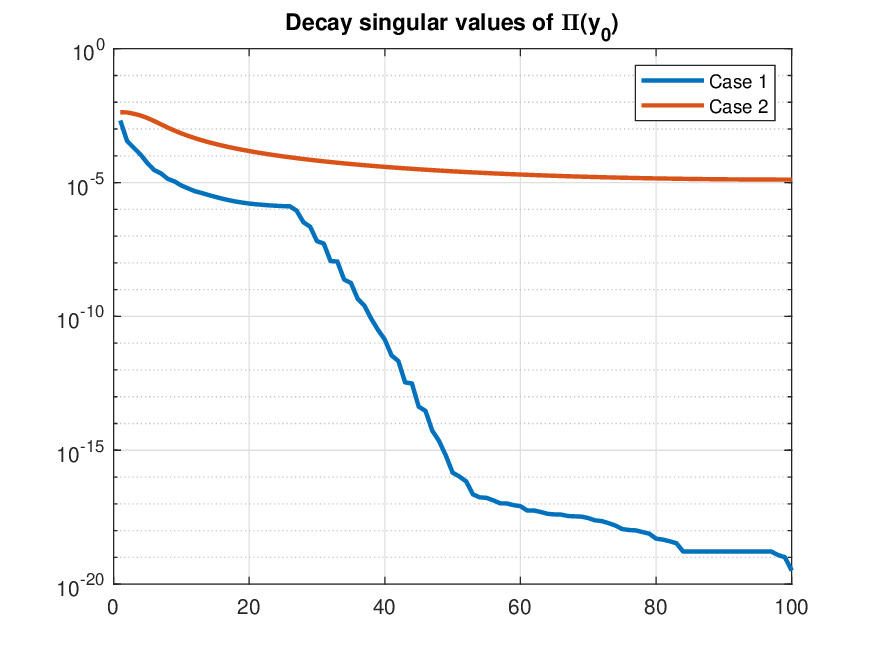}	
	\includegraphics[width=0.49\textwidth]{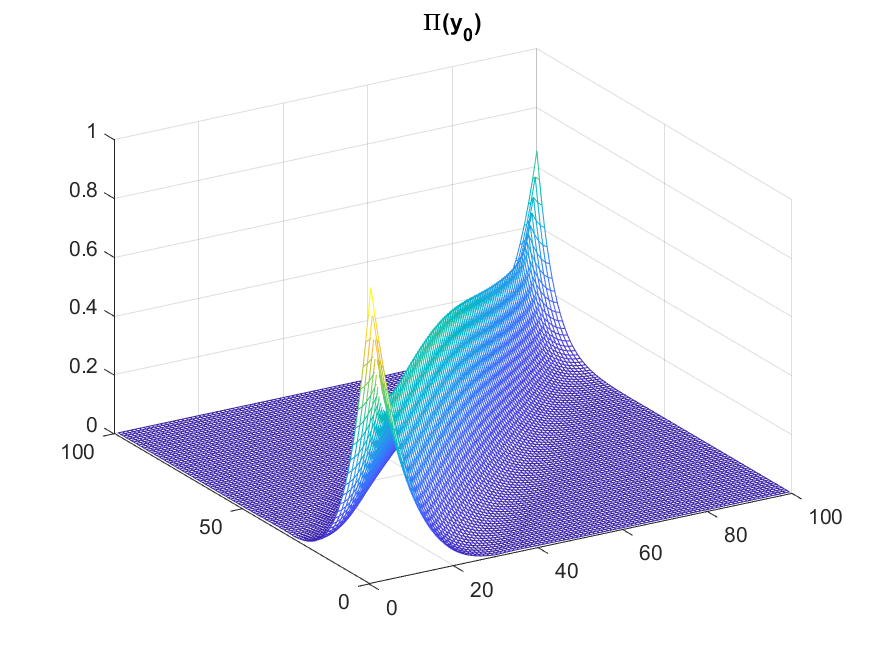}
\caption{Zeldovich example. \emph{Left}: Decay of the singular values of the Riccati solution $P(y_0)$ for the two different cases. \emph{Right}: Off diagonal decay for the solution $P(y_0)$ in the second case.}
\label{pics:sing_value}
\end{center}

\end{figure}

\vspace{0.5cm}

{\bf Case 1}

\vspace{0.2cm}

We first consider the scenario where the control and observation domains are given by $\omega_c= [0.2,0.5] $ and $ \omega_o = [0.5,0.7]$, respectively, with parameters $\sigma = 0.2$, $\tilde{\gamma} = 0.01$ and $\nu = 0.5$. The reaction parameter $\mu$ varies within the set $\{1,2\}$. Table \ref{table_AC_low_rank} presents the CPU time and the total cost obtained using different numerical approaches for the considered values of $\mu$. The results indicate that the most efficient method, achieving the lowest total cost, is the \texttt{C-NK} approach. While the SDRE method based on \texttt{icare} yields a total cost comparable to that of \texttt{C-NK}, being approximately 60 times slower. Conversely, the offline-online approach requires up to four times the CPU time and results in a higher total cost.  The superior computational efficiency of \texttt{C-NK} can be attributed to the fact that, for certain time steps, the previously computed solution exhibits a low residual. As a result, the resolution of a Lyapunov equation is not required at every step, reducing computational overhead.
In the case $\mu=2$, the control law generated by the offline-online method fails to stabilize the system, as illustrated in the right panel of Figure \ref{pics:AC_cost_low_rank}.
Furthermore, Figure \ref{pics:AC_cost_low_rank} demonstrates that the \texttt{C-NK} and \texttt{icare}-based methods exhibit similar performance trends.

\begin{table}[hbht]
\centering
\begin{tabular}{c|cc|cc|cc}    

        & \multicolumn{2}{c|}{offline-online}   & \multicolumn{2}{c|}{\texttt{C-NK}} & \multicolumn{2}{c}{\texttt{icare}} \\
$\mu$   & CPU time & Total cost & CPU time & Total cost & CPU time & Total cost 
\\\hline
1    & 0.98  &   3.10e-01 & 0.27   &  3.08e-01  &  15.7 & 3.08e-01\\

2 & 1.02  &   1.63e+03  & 0.25   &  8.56e-01   &  15.5 & 8.56e-01  \\
 \end{tabular}
  \caption{Zeldovich example (Case 1). Comparison of the performance in the computation of the controlled trajectory via the offline-online approach, \texttt{C-NK} method and via the \texttt{icare}-based SDRE.}
 \label{table_AC_low_rank}
\end{table}

\begin{figure}[H]	
\begin{center}    
	\includegraphics[width=0.49\textwidth]{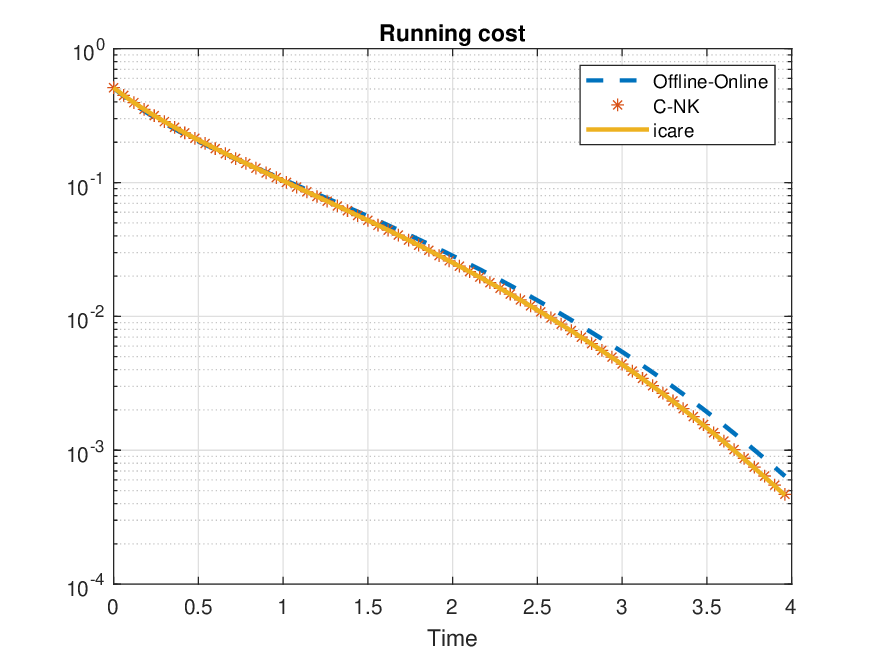}	
	\includegraphics[width=0.49\textwidth]{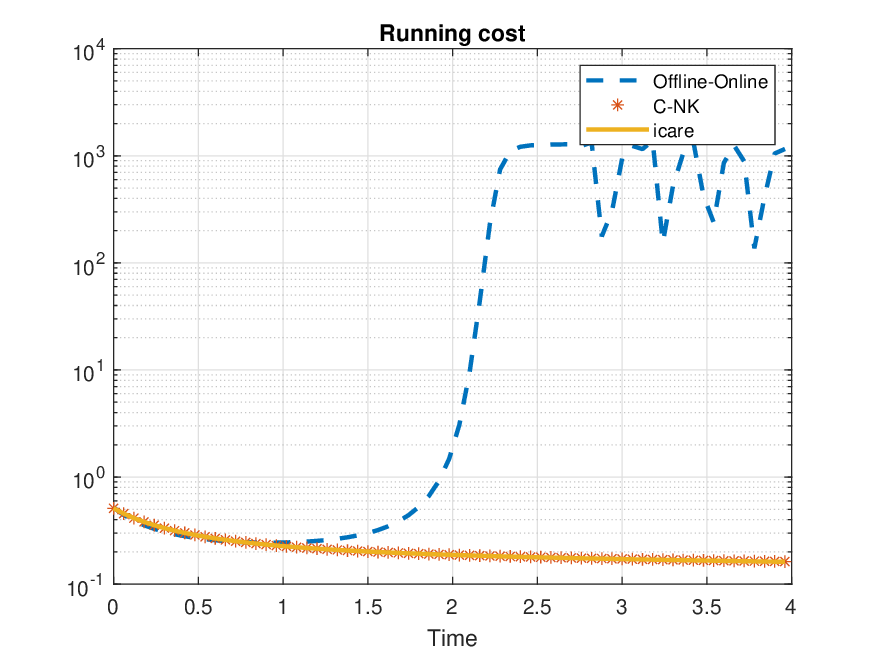}	
\caption{Zeldovich example (Case 1). Running cost for the controlled trajectories with the different techniques with $\mu =1$ (left) and $\mu=2$ (right).}
\label{pics:AC_cost_low_rank}
\end{center}

\end{figure}

\vspace{0.5cm}

{\bf Case 2}

\vspace{0.2cm}

In this second case, we consider the fully controlled and observed setting, where $\omega_c = \omega_o = [0,1]$. The parameters are set to $\sigma = 10^{-2}$, $\tilde{\gamma} = 0.1$ and $\nu = 0.5$, while the reaction coefficient $\mu$ varies within the set $\{1,2\}$. The results obtained for the three numerical techniques across different values of $\mu$ are summarized in Table \ref{table_AC_full_rank} and illustrated in Figure \ref{pics:AC_cost_full_rank}. For the choice $\mu = 2$, the final simulation time is reduced to 0.6, due to the fast divergence of the offline-online method.
The observed trends are consistent with those in the previous case. Once again, the \texttt{C-NK} method demonstrates the best performance in terms of both computational efficiency and total cost. The \texttt{icare}-based SDRE approach achieves the same total cost as \texttt{C-NK}, but its computational time is more than 40 times higher for the first choice of $\mu$, further emphasizing the efficiency of \texttt{C-NK}.
Regarding the offline-online method, for $\mu =1$ the total cost is comparable to that of the other two techniques. As in the previous case, increasing the reaction coefficient to $\mu = 2$ results in the failure of the offline-online approach to stabilize the system.

\begin{table}[hbht]
\centering
\begin{tabular}{c|cc|cc|cc}    

        & \multicolumn{2}{c|}{offline-online}   & \multicolumn{2}{c|}{\texttt{C-NK}} & \multicolumn{2}{c}{\texttt{icare}} \\
$\mu$   & CPU time & Total cost & CPU time & Total cost & CPU time & Total cost 
\\\hline
1    & 0.53  &   2.49e-01 & 0.46   &  2.38e-01  &  21.2 & 2.38e-01\\

2 & 0.16  &   3.57e+04 & 0.30   &  2.78e-01   &  8.11 & 2.78e-01  \\
 \end{tabular}
  \caption{Zeldovich example (Case 2). Comparison of the performance in the computation of the controlled trajectory via the offline-online approach, \texttt{C-NK} method and via the \texttt{icare}-based SDRE.}
 \label{table_AC_full_rank}
\end{table}

\begin{figure}[H]	
\begin{center}    
	\includegraphics[width=0.49\textwidth]{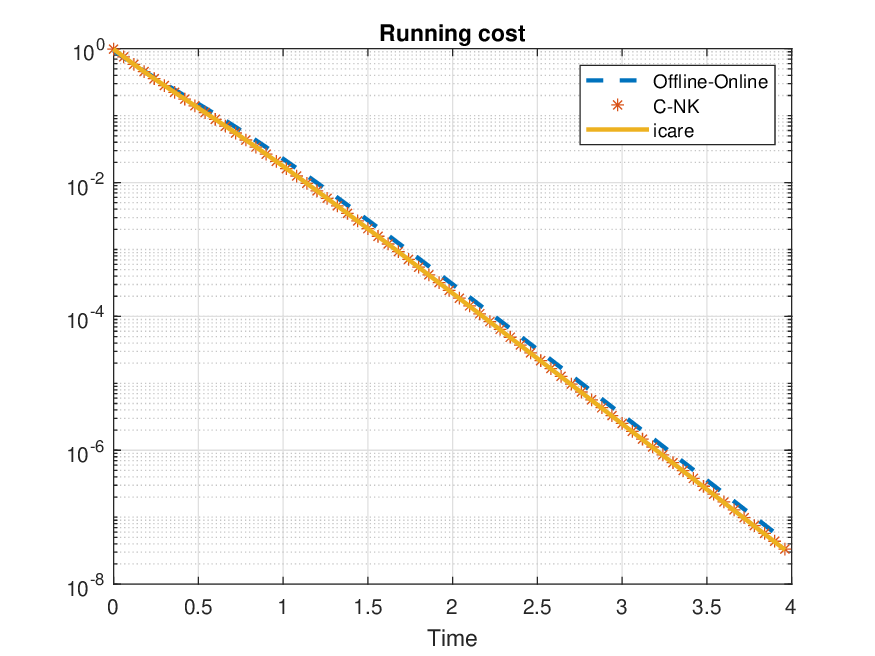}	
    \includegraphics[width=0.49\textwidth]{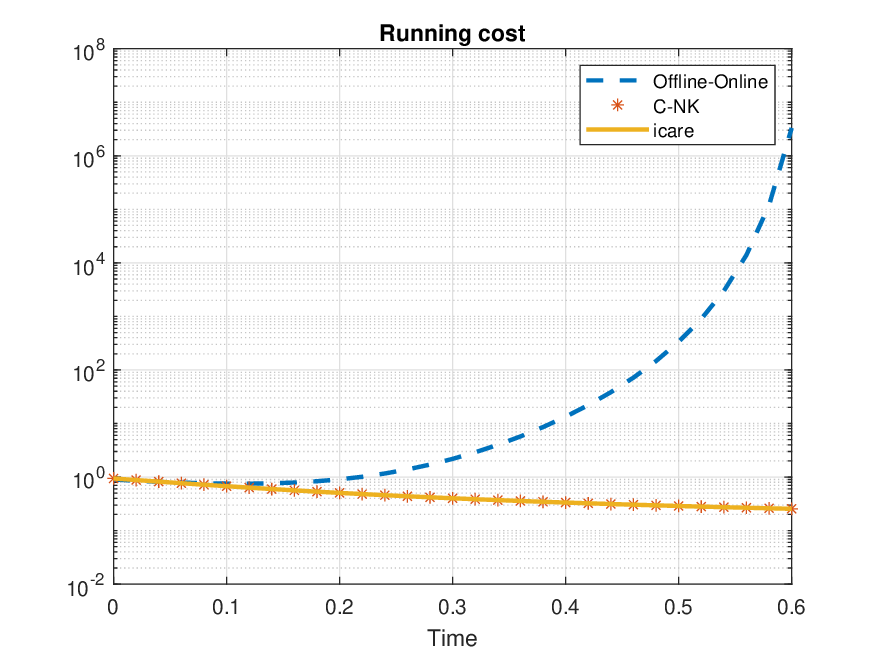}	
\caption{Zeldovich example (Case 2). Running cost for the controlled trajectories with the different techniques with $\mu =1$ (left) and $\mu=2$ (right).}
\label{pics:AC_cost_full_rank}
\end{center}
\end{figure}


\section{Conclusions}

This work provides an analysis of the State-Dependent Riccati Equation method for nonlinear optimal control, focusing on its theoretical foundations, numerical approximations, and computational efficiency. We examined the relationship between SDRE and the HJB equation, deriving error estimates based on the residual introduced by the SDRE approximation. Our analysis highlighted the role of the semilinear decomposition in influencing the accuracy of the SDRE approach and proposed an optimal decomposition strategy to minimize approximation errors.

From a computational perspective, we compared two numerical methods for solving the SDRE: the offline–online approach and the Newton–Kleinman iterative method. Numerical experiments demonstrated that while the offline–online approach offers a reduced computational burden, it may fail to stabilize the system under certain conditions. In contrast, the C-NK method consistently provided more accurate and stable controlled solutions, making it a preferable choice in real-time control applications.

Future work could explore more efficient numerical methods for solving the sequence of Riccati equations in high-dimensional settings, leveraging low-rank approximations, sparsity-preserving methods, or data-driven techniques. Additionally, extending the SDRE framework to stochastic control scenarios would be a valuable direction for further research.

\vspace{1cm}

{\bf Acknowledgements.} The work has been partially supported by Italian Ministry of
Instruction, University and Research (MIUR) (PRIN Project 2022 2022238YY5, “Optimal control problems: analysis, approximation”) and INdAM-research group GNCS (CUP E53C24001950001,
“Metodi avanzati per problemi di Mean Field Games ed applicazioni”).

\vspace{5mm}

{\bf Data Availability.} Data will be made available on request.

\vspace{1cm}

\bibliographystyle{spmpsci}
\bibliography{sample,biblio}
\end{document}